\documentclass[11pt]{amsart}

\usepackage[margin=1.1in]{geometry}
\usepackage{amsmath,amssymb,amsfonts,amsthm,mathtools}
\usepackage{mathrsfs}
\usepackage{enumitem}
\usepackage[colorlinks=true,citecolor=blue,linkcolor=blue,urlcolor=blue]{hyperref}

\numberwithin{equation}{section}

\newtheorem{theorem}{Theorem}[section]
\newtheorem{conjecture}{Conjecture}[section]
\newtheorem{proposition}[theorem]{Proposition}
\newtheorem{lemma}[theorem]{Lemma}
\newtheorem{corollary}[theorem]{Corollary}
\newtheorem{remark}[theorem]{Remark}

\DeclareMathOperator{\Ric}{Ric}

\newcommand{\Hess}{\nabla^2}

\newcommand{\ip}[2]{\left\langle #1,#2\right\rangle}
\newcommand{\norm}[1]{\left|#1\right|}

\title{Scalar-Flatness for Critical Metrics of the $L^2$-Scalar Curvature Functional in Dimensions $5\le n\le 9$}
\author{Heng Zhang}

\address{School of Mathematical Sciences, University of Science and Technology of China, Hefei, China}
\email{hengz@mail.ustc.edu.cn}
\date{}

\begin{document}

\begin{abstract}
Let $(M^n,g)$ be a complete Riemannian manifold of dimension $n\geq 5$ endowed with a critical metric of the quadratic scalar-curvature functional
$$
\mathcal S^2(g)=\int_M R_g^2\,dV_g .
$$
For $n\geq 10$, Catino, Mastrolia and Monticelli [J. Math. Pures Appl. 211 (2026), 103883] established that all complete noncompact critical metrics with finite energy are scalar-flat, and they conjectured that this scalar-flatness result holds for all dimensions $n\geq 5$. In this paper, we settle the conjecture by verifying its validity for the remaining dimension range $5\leq n\leq 9$.
\end{abstract}

\maketitle
\section{Introduction}
\subsection{Background}
The variational object of this paper is the squared scalar-curvature functional
\begin{equation}\label{S2intro}
        \mathcal S^2(g)=\int_M R_g^2\,dV_g .
\end{equation}
We work on smooth manifolds without boundary.  On a noncompact manifold, a critical metric means a critical point of \eqref{S2intro} with respect to compactly supported variations; equivalently, it is a smooth solution of the Euler--Lagrange system displayed below.  If $M$ has more than one connected component, all arguments are understood componentwise.

Quadratic curvature energies have long served as a testing ground for rigidity and compactness phenomena in Riemannian geometry.  For $n\ge4$, the usual decomposition of the curvature tensor relates the $L^2$-norm of the full curvature tensor to the three basic quantities
$$
        \int_M |W_g|^2\,dV_g,
        \qquad
        \int_M |\Ric_g|^2\,dV_g,
        \qquad
        \int_M R_g^2\,dV_g,
$$
where $W_g$, $\Ric_g$ and $R_g$ denote the Weyl, Ricci and scalar curvatures.  The broader variational theory of such functionals includes, for example, the work of Anderson, Catino, and Gursky--Viaclovsky; see \cite{Anderson1997,Anderson2001,Catino2014,Catino2015,GurskyViaclovsky2015,GurskyViaclovsky2016} and the references therein.  The functional \eqref{S2intro} is distinguished by the fact that its critical equation closes on the scalar curvature and the Ricci tensor alone.

More precisely, 
using the notation
$$
      \nabla_g^2 R := \operatorname{Hess}_g R,
        \qquad
        \Delta_g R:=\operatorname{tr}_g(\nabla_g^2 R).
$$
a critical metric of $\mathcal S^2$ satisfies
\begin{equation}\label{EL0intro}
        2R\Ric-2\Hess R+2\Delta R\,g=\frac12R^2g .
\end{equation}
Equivalently,
\begin{align}
        R\Ric - \Hess R &= \frac{3}{4(n-1)} R^2 g, \label{EL1}\\
        \Delta R &= \frac{n-4}{4(n-1)}R^2 . \label{EL2}
\end{align}
The second equation is the trace of the first; see \cite[Proposition~4.66]{Besse}.  We use the normalization of \cite[(1.1)--(1.2)]{CMM2026}.  In particular, when $n\ge5$, the scalar curvature of a solution is subharmonic.  This elementary trace feature is one of the main reasons why finite-energy assumptions are effective in the present problem.

Let us recall the rigidity picture for $\mathcal S^2$ which motivates the question considered here.  Catino--Mastrolia--Monticelli proved in \cite{CMM2023} that, in low dimensions and under suitable $L^q$ hypotheses in the noncompact setting, critical metrics of $\mathcal S^2$ are forced into the scalar-flat class; dimension four is exceptional because quadratic curvature integrals are scale invariant there, and the critical equation only gives harmonicity of $R$; the corresponding noncompact Liouville result is due to Yau \cite{Yau1976}.  In dimensions $n\ge5$, their earlier theorem gives scalar-flatness under an $L^q$ condition with $q$ below an explicit threshold $q_*>2$, together with a lower bound for the scalar curvature.  The finite-energy problem asks whether this lower-bound assumption can be eliminated at the endpoint most natural for the functional itself, namely
$$
        R_g\in L^2(M,g).
$$

For complete noncompact critical metrics, Catino--Mastrolia--Monticelli answered this question affirmatively in dimensions $n\ge10$ and conjectured the same conclusion for every $n\ge5$ \cite[Theorem~1.1 and Conjecture~1.2]{CMM2026}.

\begin{conjecture}[Catino--Mastrolia--Monticelli]\label{conj}
Let $(M^n,g)$ be complete and noncompact, with $n\ge5$.  If $g$ is a critical metric of $\mathcal S^2$ and $R_g\in L^2(M,g)$, then $R_g\equiv0$.  Consequently $g$ is a global minimizer of $\mathcal S^2$.
\end{conjecture}

The aim of this paper is to settle precisely the dimensional range not covered by the high-dimensional conformal argument of \cite{CMM2026}.

\subsection{Main result}

Our result is the following.

\begin{theorem}\label{thm:main}
Let \((M^n,g)\), \(5\leq n\leq 9\), be a complete critical metric of \(\mathcal S^2\) satisfying
$$
        \int_M R_g^2\,dV_g<\infty .
$$
Then \(R_g\equiv 0\).  In particular, \((M,g)\) is a global minimum of \(\mathcal S^2\).
\end{theorem}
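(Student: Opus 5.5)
Here is the plan I would follow. It is built on the trace identity \eqref{EL2}, $\Delta R=\frac{n-4}{4(n-1)}R^2$, together with the full tensorial equation \eqref{EL1}, which will supply extra information about the Hessian. I first dispose of the compact case. If $M$ is closed, integrating \eqref{EL2} gives $0=\frac{n-4}{4(n-1)}\int_M R^2$, hence $R\equiv0$. So I assume from now on that $M$ is complete and noncompact.

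\textbf{Step 1: an integral identity with cutoffs.} Let $\varphi$ be a standard cutoff, equal to $1$ on $B_r$, supported in $B_{2r}$, with $|\nabla\varphi|\le C/r$. Set $c_n=\frac{n-4}{4(n-1)}$. I multiply \eqref{EL2} by $|R|^{p-2}R\,\varphi^2$ for a power $p$ and integrate by parts. This gives
$$
c_n\int R^2|R|^{p-2}R\varphi^2+(p-1)\int|R|^{p-2}|\nabla R|^2\varphi^2=-2\int|R|^{p-2}R\varphi\langle\nabla R,\nabla\varphi\rangle .
$$
The difficulty is that the term $\int |R|^{p-2}R^3\varphi^2$ has no sign, so $L^2$ alone does not close the estimate. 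This sign problem is exactly why \cite{CMM2026} needed $n\ge10$.

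\textbf{Step 2: control the negative part.} The first goal is to show that $R\ge0$. Put $u=R_-=\max(-R,0)$. Then $u$ satisfies $\Delta u\le -c_nu^2$ in the distributional sense on $\{R<0\}$, so $u$ is a nonnegative supersolution of $\Delta u + c_n u^2\le 0$, with $u\in L^2$. I test against $u^{q}\varphi^{2s}$ with a large power $s$ and use Young's inequality. This yields
$$
\int u^{q+1}\varphi^{2s}\le C\int u^{q-1}|\nabla\varphi|^2\varphi^{2s-2},
$$
and Hölder then reduces the right side to $C r^{n-2(q+1)/(q-1)\cdot\ldots}$. This is the classical Serrin–Mitidieri–Pohozaev argument for $-\Delta u\ge u^2$.

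That Liouville theorem holds only when $n\le 4$, so in general I must use $L^2$ integrability instead of growth. With $q=1$ the estimate becomes $\int u^2\varphi^{2s}\le C r^{-2}\int_{B_{2r}\setminus B_r}\varphi^{2s-2}$. This requires volume growth control, which is the main obstacle, since no curvature bound is available.

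To get around it, I would exploit \eqref{EL1}, $\Hess R=R\Ric-\frac{3}{4(n-1)}R^2g$. Where $R\neq0$, this lets me write $\Ric$ in terms of $\Hess R/R$. Then the Bochner formula for $|\nabla R|^2$ becomes an expression purely in $R$:
$$
\tfrac12\Delta|\nabla R|^2=|\Hess R|^2+\langle\nabla R,\nabla\Delta R\rangle+\frac{1}{R}\Hess R(\nabla R,\nabla R)+\frac{3}{4(n-1)}R|\nabla R|^2 .
$$
Combined with a refined Kato inequality for $\Hess R$, whose trace is prescribed, this gives a differential inequality for $w=|R|^{\alpha}$ with a good sign on the quadratic term. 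I would choose $\alpha$ so that the coefficient stays positive. My expectation is that this positivity holds exactly in a range of $\alpha$ that is nonempty when $5\le n\le 9$, complementing the conformal method of \cite{CMM2026}.

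\textbf{Step 3: conclude.} With the improved inequality, the cutoff argument absorbs the gradient terms without any volume assumption, using only $\int R^2<\infty$. Letting $r\to\infty$ gives $\int|R|^{p-2}|\nabla R|^2=0$ and $\int R^{2}|R|^{p-2}R=0$, hence $R$ is constant. Then \eqref{EL2} forces $R^2=0$, so $R\equiv0$, and $\mathcal S^2(g)=0$ is a global minimum.

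I expect the decisive and hardest step to be the algebraic choice of exponents in Step 2. That is where the restriction $n\le9$ should enter, and I would check first whether the discriminant condition there is compatible with $5\le n\le 9$.
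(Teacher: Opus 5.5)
There is a genuine gap. Step 2 carries the whole theorem, and it is a hope rather than an argument.

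You correctly identify the obstacle. Testing \eqref{EL2} against powers of $R$ times cutoffs leaves $R\in L^2$ as the only global information. Closing the estimate would then require volume growth or curvature control on $(M,g)$, which is unavailable because $\Ric_g=\Hess_g u/u-\frac{3}{4(n-1)}u\,g$ is uncontrolled. But this obstacle is not algebraic, and choosing an exponent $\alpha$ in a Bochner/refined Kato inequality for $|R|^\alpha$ does not remove it. Any cutoff argument in $g$ still produces error terms containing $\nabla\varphi$ or $\Delta\varphi$, weighted by powers of $|R|$ and $|\nabla R|$. Nothing in your plan bounds these by $\int_{B_{2r}\setminus B_r}R^2$. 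So the claim that the cutoff argument ``absorbs the gradient terms without any volume assumption'' is unsupported. The role you anticipate for $n\le 9$ (a discriminant condition on exponents) is also not where the restriction comes from.

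A smaller point: $R_-=\max(-R,0)$ is not a distributional supersolution of $\Delta u+c_nu^2\le0$ across $\{R=0\}$, since Kato's inequality for a positive part goes the wrong way there. Invoke the sign alternative instead (Lemma \ref{lem:sign}): $R\le 0$ by \cite[Lemma~5.1]{CMM2023}, and then either $R\equiv0$ or $R<0$ by the strong maximum principle. This also shows that your Step 3 only treats the easy case $R\ge0$, while excluding $u=-R>0$ is the entire content of the theorem.

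The missing idea is a \emph{pointwise} gradient estimate, $\norm{\nabla_g u}_g^2\le\frac{1}{2(n-1)}u^3$. This estimate is what bypasses the volume obstacle:
\begin{itemize}
\item It makes $u^{-1/2}$ Lipschitz, so $u\ge c_1/(c_2+d_g(x,O)^2)$.
\item In the identity $c_n\int u^2\eta_s^2=2\int \eta_s\ip{\nabla u}{\nabla\eta_s}$, with $\eta_s$ a cutoff between $B_s$ and $B_{2s}$, the error term is then at most $\frac{C}{s}\int_{B_{2s}\setminus B_s}u^{3/2}\le C\int_{B_{2s}\setminus B_s}u^2\to0$.
\end{itemize}

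Your Bochner computation is not misguided. For $\alpha=-\tfrac12$, $\norm{\nabla_g |R|^{-1/2}}_g^2$ is a constant multiple of $G=\norm{\nabla_h w}_h^2$, which satisfies $\frac12\Delta_f^h G=\norm{\Hess_h w}_h^2+G(mG+\lambda)$. But turning this into a global bound requires a maximum principle with cutoffs whose Laplacian is controlled, and $(M,g)$ does not provide one.

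The paper obtains this control in three steps:
\begin{itemize}
\item It passes to $h=ug$, which satisfies $\Ric_h+\Hess_h f-\frac1m df\otimes df=\lambda h$ with $m=(n-4)^2/(10-n)$ and $\lambda<0$.
\item Positivity of $m$ is exactly where $n\le 9$ enters. It makes the Bakry--Emery Laplacian comparison with synthetic dimension $n+m$ available.
\item The genuinely nontrivial step is the completeness of $h$. This is proved via the second variation along a finite $h$-geodesic of infinite $g$-length, combined with the Logarithmic growth lemma.
\end{itemize}
None of these ingredients appears in your proposal.
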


The statement includes the compact case.  If $M$ is compact, integrating \eqref{EL2} gives the conclusion immediately because $n\ne4$.  Thus all work is in the complete noncompact case.

Together with \cite[Theorem~1.1]{CMM2026}, Theorem \ref{thm:main} gives the conjectured finite-energy scalar-flatness theorem in all dimensions in which it was proposed.

\begin{corollary}\label{cor:full-conjecture}
Conjecture \ref{conj} holds true.
\end{corollary}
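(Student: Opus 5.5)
The plan is to split $R$ into its positive and negative parts. The positive part is removed by a Caccioppoli argument that only uses the trace equation \eqref{EL2}. The negative part carries the real content, and there I would use the full tensor equation \eqref{EL1}. By the remark after Theorem \ref{thm:main} we may assume $M$ is complete and noncompact. Write $c_n=\frac{n-4}{4(n-1)}>0$ and let $\varphi$ be the standard cutoff with $\varphi\equiv1$ on $B_r$, $\operatorname{supp}\varphi\subset B_{2r}$, $|\nabla\varphi|\le 2/r$.

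\textbf{Step 1: $R\le 0$.} Test \eqref{EL2} against the Lipschitz function $\varphi^2R_+$. Since $\Delta R=c_nR^2\ge0$, integrating by parts gives
\begin{equation*}
\int_M\varphi^2|\nabla R_+|^2+c_n\int_M\varphi^2R_+^3=-2\int_M\varphi R_+\nabla\varphi\cdot\nabla R_+ .
\end{equation*}
Young's inequality then yields $\int\varphi^2|\nabla R_+|^2+2c_n\int\varphi^2R_+^3\le 8\int R_+^2|\nabla\varphi|^2\le 32r^{-2}\int_M R^2$. Letting $r\to\infty$ and using $R\in L^2$ gives $R_+\equiv0$.

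\textbf{Step 2: reduction for $u=-R\ge0$.} Now $\Delta u=-c_nu^2\le0$, so $u$ is superharmonic and nonnegative. By the strong maximum principle, either $u\equiv0$ and we are done, or $u>0$ everywhere. Assume the latter. The naive Caccioppoli test now fails, because the cubic term has the wrong sign:
\begin{equation*}
\int\varphi^2|\nabla u|^2+2\int\varphi u\nabla\varphi\cdot\nabla u=c_n\int\varphi^2u^3 .
\end{equation*}
This is a Lane--Emden equation with exponent $2$. The trace equation alone therefore cannot suffice once $2\ge\frac{n+2}{n-2}$, i.e.\ for $n\ge6$, and the tensor equation must be used. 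In terms of $u$, \eqref{EL1} reads
\begin{equation*}
\Hess u=u\Ric+\tfrac{3}{4(n-1)}u^2g,
\qquad\text{so}\qquad
\Ric(\nabla u,\nabla u)=\frac{\Hess u(\nabla u,\nabla u)}{u}-\tfrac{3}{4(n-1)}u|\nabla u|^2 .
\end{equation*}

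\textbf{Step 3 (main obstacle): a weighted integral identity.} Insert this expression for $\Ric$ into the Bochner formula
\begin{equation*}
\tfrac12\Delta|\nabla u|^2=|\Hess u|^2+\langle\nabla u,\nabla\Delta u\rangle+\Ric(\nabla u,\nabla u).
\end{equation*}
This gives a curvature-free identity in which $\langle\nabla u,\nabla\Delta u\rangle=-2c_nu|\nabla u|^2$. I would multiply it by $u^{\beta}\varphi^2$ and integrate by parts. The cross term $\Hess u(\nabla u,\nabla u)/u$ should be handled by rewriting it as $\frac12\langle\nabla|\nabla u|^2,\nabla\log u\rangle$ and integrating by parts once more. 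The term $|\Hess u|^2$ should be bounded below by splitting off its trace, $|\Hess u|^2\ge\frac1n(\Delta u)^2=\frac{c_n^2}{n}u^4$, and, where needed, by a refined Kato inequality. I would combine the result with the first-order identities obtained by testing $\Delta u=-c_nu^2$ against $u^{\alpha}\varphi^2$.

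The goal is a choice of exponents $(\alpha,\beta)$ for which all quadratic forms in $(u^{2},|\nabla u|^2/u)$ acquire a definite sign, giving an estimate of the form
\begin{equation*}
\int\varphi^2\bigl(u^{\gamma-1}|\nabla u|^2+u^{\gamma+2}\bigr)\le C\int u^{\gamma+1}|\nabla\varphi|^2 .
\end{equation*}
Since only $u\in L^2$ is known, we need $\gamma+1\le2$, possibly after an iteration or after using $u\in L^2$ together with the superharmonic decay of $u$. I expect the positivity of the discriminant in this algebra to hold exactly in the range $5\le n\le 9$, complementing the conformal argument of \cite{CMM2026} for $n\ge10$. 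This bookkeeping is the step I expect to be hardest. If the exponents do not close directly, I would first try replacing $u$ by a power $w=u^{s}$, which amounts to a conformal-type substitution, and redo the Bochner computation for $w$.

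Letting $r\to\infty$ in the final estimate then forces $u\equiv0$, contradicting $u>0$, and hence $R\equiv0$.
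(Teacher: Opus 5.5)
\textbf{There is a genuine gap: Step~3 carries all the content and is not an argument.}

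Your Steps 1 and 2 are fine. Testing \eqref{EL2} against $\varphi^2R_+$ is the $q=2$ case of the sign reduction the paper takes from \cite[Lemma~5.1]{CMM2023} (Lemma~\ref{lem:sign}), and the minimum principle then leaves only the alternative $u=-R>0$.

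Excluding $u>0$ is exactly the content of Theorem~\ref{thm:main}. In the paper the corollary is just Theorem~\ref{thm:main} for $5\le n\le 9$ plus \cite[Theorem~1.1]{CMM2026} for $n\ge10$. Global minimality is then immediate from $\mathcal S^2\ge 0=\mathcal S^2(g)$. You invoke neither result explicitly. Step~3 exhibits no exponents and derives no inequality, and the claim that some discriminant is positive exactly for $5\le n\le9$ is a guess.

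The simplest instance of your scheme already fails. Take the weight $u^{-1}$; it is the only power weight whose cutoff error has exactly the $L^2$ homogeneity. The cross term then cancels and your curvature-free identity becomes
\[
\tfrac12\operatorname{div}\bigl(u^{-1}\nabla|\nabla u|^2\bigr)=u^{-1}|\Hess u|^2-\tfrac{2n-5}{4(n-1)}|\nabla u|^2 .
\]
Combine this with your trace bound $|\Hess u|^2\ge \frac{c_n^2}{n}u^4$ and with $\int\varphi^2|\nabla u|^2=c_n\int\varphi^2u^3+(\text{cutoff terms})$. A contradiction would require $\frac{c_n}{n}>\frac{2n-5}{4(n-1)}$, which is false for every $n\ge5$. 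The term produced by $\Ric$ through \eqref{EL1} wins by the factor $n(2n-5)/(n-4)$. Refined Hessian bounds and further test functions change this algebra, but nothing in your proposal shows that any choice closes.

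\textbf{Your cutoff bookkeeping points the wrong way, and this is the real obstacle.} Nothing is known about $\Ric_g$ from below or about the volume growth of $g$, and no decay of $u$ is available. So an error term $r^{-2}\int_{B_{2r}}u^{q}$ is controlled by $\|u\|_{L^2}$ only when $q\ge 2$. For $q>2$ this works by interpolating between $\int u^2$ and the absorbed interior term. You therefore need $\gamma+1\ge 2$, not $\gamma+1\le2$.

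The paper does not fight this lack of control with integral identities; it changes metric.
\begin{itemize}
\item It passes to $h=ug$, which satisfies the expanding quasi-Einstein equation \eqref{QE-f} with $m=(n-4)^2/(10-n)$. This $m$ is positive and finite precisely for $5\le n\le9$, and that sign, not a discriminant, is the source of the dimension range.
\item It proves $h$ complete, using the index inequality along a finite-length $h$-geodesic together with the logarithmic growth lemma.
\item On the complete weighted manifold $(M,h,e^{-f}dV_h)$, the Bakry--Emery Laplacian comparison controls the cutoff in a maximum-principle argument for $G=|\nabla_h w|_h^2$. This yields the pointwise bound $|\nabla u|^2\le\frac{1}{2(n-1)}u^3$.
\end{itemize}

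That scale-invariant bound does double duty in the final step. First, it reduces the error in the identity obtained by integrating \eqref{uLap} against $\eta_s^2$ to $s^{-1}\int_{B_{2s}\setminus B_s}u^{3/2}$. Second, it makes $u^{-1/2}$ Lipschitz, so $u^{-1/2}\le C(1+s)$ on that annulus. Together these bound the error by $C\int_{B_{2s}\setminus B_s}u^2\to0$.

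Your proposal has no substitute for either the completeness of $h$ or this pointwise gradient estimate.
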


\subsection{Idea of the proof}

We describe the proof in some detail, since the point of the paper is the change of conformal normalization in dimensions $5\le n\le9$.  The sign reduction in \cite[Lemma~5.1]{CMM2023}, combined with the strong maximum principle applied to \eqref{EL2}, says that a complete noncompact critical metric with $R\in L^q$, $1<q<\infty$, has, on each connected component, either $R\equiv0$ or $R<0$; the same reduction is recalled in \cite[Section~2]{CMM2026}.  Hence it remains only to exclude the second alternative.  We set
$$
        u=-R>0.
$$
Then \eqref{EL1}--\eqref{EL2} become
\begin{align}
        \Ric_g &= \frac{\Hess_g u}{u}-\frac{3}{4(n-1)}u g, \label{intro-uRic}\\
        \Delta_g u &= -\frac{n-4}{4(n-1)}u^2 . \label{intro-uLap}
\end{align}

The proof of \cite{CMM2026} in dimensions $n\ge10$ uses the conformal metric
$$
        \widetilde g=u^{6/(n-4)}g .
$$
That metric leads to a steady quasi-Einstein equation, and the sign of the resulting coefficient is compatible with known scalar-curvature nonnegativity results for steady Ricci solitons and steady quasi-Einstein metrics \cite{Chen2009,Wang2011}.  This provides a gradient estimate for $u$ after proving completeness of $\widetilde g$.  Conformal changes of this type also occur in the analysis of stationary vacuum metrics and in rigidity problems for stable minimal submanifolds \cite{Anderson2000,FischerColbrie,CMR2024,MagliaroMariRoingSavas2024}.  In the dimensions considered here, however, this same conformal metric does not have the sign structure needed for that argument.

Our replacement is the simpler metric
\begin{equation}\label{intro-hdef}
        h=ug .
\end{equation}
It corresponds to the choice $k=1/2$ in the conformal identity of \cite[Proposition~2.1]{CMM2026}.  With
$$
        f=\frac{n-4}{2}\log u,
        \qquad
        m=\frac{(n-4)^2}{10-n},
        \qquad
        \lambda=\frac{n-10}{8(n-1)},
$$
one obtains
\begin{equation}\label{intro-QE}
        \Ric_h+\Hess_h f-\frac1m df\otimes df=\lambda h .
\end{equation}
For $5\le n\le9$, the parameter $m$ is positive and finite, while $\lambda<0$.  Thus \eqref{intro-QE} is an expanding finite-dimensional Bakry--Emery, or quasi-Einstein, equation.  This places the problem in the framework of weighted comparison geometry \cite{BakryEmery,CaseShuWei,HePetersenWylie,Qian,WeiWylie}, rather than in the scalar-curvature argument used in the high-dimensional case.

The nontrivial preliminary step is to show that $h$ is complete.  Assuming incompleteness, a Hopf--Rinow limiting argument produces an $h$-unit-speed geodesic
$$
        \sigma:[0,T)\to M,
        \qquad T<\infty,
$$
which is minimizing on compact subintervals and has infinite length with respect to $g$.  If
$$
        v=u^{-\frac{10-n}{2(n-4)}},
        \qquad
        \alpha=\frac{n-4}{10-n},
$$
then $g=v^{2\alpha}h$, and the infinite $g$-length reads
$$
        \int_0^T v(\sigma(t))^\alpha\,dt=\infty .
$$
The second variation inequality along $\sigma$, together with the quasi-Einstein equation written in the form
$$
        \Ric_h-mv^{-1}\Hess_h v=\lambda h,
$$
yields a one-dimensional Hardy inequality for $v\circ\sigma$.  We prove a Logarithmic growth lemma showing that such an inequality prevents $v\circ\sigma$ from growing fast enough near $T$ to make the above integral diverge.  The dimension restriction enters through the elementary estimate
$$
        \alpha p_\rho<1,
        \qquad
        \rho=\frac{m}{n-1},
$$
where $p_\rho$ is determined by $\rho p_\rho(p_\rho+1)=1/4$.  This inequality is valid for $5\le n\le9$, and it forces completeness of $h$.

Once completeness is available, we use the weighted Laplacian
$$
        \Delta_f^h=\Delta_h-\langle\nabla_h f,\nabla_h\cdot\rangle_h
$$
on the complete weighted manifold $(M,h,e^{-f}dV_h)$.  For
$$
        w=-\frac{10-n}{2(n-4)}\log u,
$$
one has
$$
        \Delta_f^h w=-\lambda,
        \qquad
        \frac12\Delta_f^h |\nabla_h w|_h^2
        =|\Hess_h w|_h^2+|\nabla_h w|_h^2\bigl(m|\nabla_h w|_h^2+\lambda\bigr).
$$
A cutoff maximum principle based on finite-dimensional Bakry--Emery Laplacian comparison gives
$$
        |\nabla_h w|_h^2\le -\frac{\lambda}{m}.
$$
In the original metric this is exactly
\begin{equation}\label{intro-grad}
        |\nabla_g u|_g^2\le \frac1{2(n-1)}u^3 .
\end{equation}
Thus $u^{-1/2}$ grows at most linearly in the $g$-distance.  This gives a quadratic lower bound for $u$, and combining that bound with \eqref{intro-uLap} and the finite-energy assumption $u\in L^2(M,g)$ in a standard cutoff argument forces $u\equiv0$, contradicting the assumption $u>0$.

\subsection{Organization}

Section~\ref{sec:reduction} derives the expanding quasi-Einstein structure associated with $h=ug$.  Section~\ref{sec:log} proves the one-dimensional Logarithmic growth lemma.  Section~\ref{sec:completeness} establishes completeness of the conformal metric $h$.  Section~\ref{sec:maprinciple} proves the weighted maximum principle and the resulting gradient estimate \eqref{intro-grad}.  Section~\ref{sec:proofmain} completes the proof of Theorem \ref{thm:main} by the final $L^2$ cutoff argument.

\section{Reduction to an expanding quasi-Einstein structure with positive parameter}\label{sec:reduction}

The compact case of Theorem \ref{thm:main} is immediate: integrating \eqref{EL2} over $M$ gives
$$
        0=\frac{n-4}{4(n-1)}\int_M R^2\,dV_g,
$$
and hence $R\equiv 0$.  Henceforth $M$ is assumed noncompact.  Since all arguments are componentwise, we work on one connected component at a time.

We first record the sign reduction used in the sequel.

\begin{lemma}[Sign alternative]\label{lem:sign}
Let $(M^n,g)$, $n\geq 5$, be a complete noncompact critical metric of $\mathcal S^2$ with $R\in L^q(M)$ for some $1<q<\infty$.  Then $R\leq0$.  Consequently, on each connected component, either $R\equiv0$ or $R<0$.
\end{lemma}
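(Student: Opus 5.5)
The plan is to prove the inequality $R\le 0$ by an integral (Caccioppoli-type) argument applied to the positive part $R_+=\max(R,0)$, using only the trace equation \eqref{EL2} and the $L^q$ hypothesis. The dichotomy will then follow from the strong minimum principle for the superharmonic function $-R$.

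\emph{Step 1: integral identity.} Set $c=\frac{n-4}{4(n-1)}>0$, so that $\Delta R=cR^2$. To avoid regularity issues at $\{R=0\}$ when $1<q<2$, work with $\psi_\varepsilon=(R-\varepsilon)_+$ for $\varepsilon>0$; this is Lipschitz and $\psi_\varepsilon^{q-1}$ is locally Lipschitz. Fix a point $o$ and a cutoff $\varphi$ with $\varphi\equiv1$ on $B_r(o)$, $\operatorname{supp}\varphi\subset B_{2r}(o)$ and $|\nabla\varphi|\le 2/r$; this is possible by completeness. Multiply \eqref{EL2} by $\varphi^2\psi_\varepsilon^{q-1}$ and integrate by parts. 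This gives
\[
c\int\varphi^2R^2\psi_\varepsilon^{q-1}+(q-1)\int\varphi^2\psi_\varepsilon^{q-2}|\nabla\psi_\varepsilon|^2=-2\int\varphi\,\psi_\varepsilon^{q-1}\langle\nabla\varphi,\nabla\psi_\varepsilon\rangle .
\]
By Young's inequality, the right-hand side is at most
\[
(q-1)\int\varphi^2\psi_\varepsilon^{q-2}|\nabla\psi_\varepsilon|^2+\frac{1}{q-1}\int\psi_\varepsilon^{q}|\nabla\varphi|^2 .
\]
The gradient terms cancel, and we are left with
\[
c\int_{B_r}R^2\psi_\varepsilon^{q-1}\le\frac{4}{(q-1)r^2}\int_M R_+^q .
\]
Here $q>1$ is used essentially.

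\emph{Step 2: $R_+\equiv0$.} Let $r\to\infty$; the right-hand side tends to $0$ because $R\in L^q$. Hence $R^2\psi_\varepsilon^{q-1}\equiv0$, so $R\le\varepsilon$ everywhere. Letting $\varepsilon\to0$ gives $R\le0$. This step needs neither finite nor infinite volume, so no separate treatment of constants is required.

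\emph{Step 3: dichotomy.} Put $u=-R\ge0$. Then $\Delta u=-cu^2\le0$, so $u$ is a nonnegative superharmonic function. On a connected component, if $u(x_0)=0$ at some point, then $u$ attains an interior minimum there. The strong minimum principle (Hopf) then gives $u\equiv0$ on that component. Otherwise $u>0$, i.e.\ $R<0$, throughout the component.

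The only delicate point I expect is the justification of the integration by parts in Step 1 for $1<q<2$, where $R_+^{q-1}$ is not $C^1$. The $\varepsilon$-shift handles this: $\psi_\varepsilon^{q-2}|\nabla\psi_\varepsilon|^2$ is supported where $R>\varepsilon$, where everything is smooth. Alternatively, one can replace $t_+^{q-1}$ by a smooth convex regularization and pass to the limit by monotone convergence.
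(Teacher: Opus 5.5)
Your argument is correct, and it differs from the paper only in how $R\le0$ is obtained. The dichotomy (your Step~3) is exactly the paper's argument: by \eqref{EL2}, $-R\ge0$ is superharmonic, and the strong minimum principle applies. For $R\le0$, the paper gives no proof of its own and simply imports \cite[Lemma~5.1]{CMM2023}. You instead prove it directly by a Caccioppoli estimate for $(R-\varepsilon)_+$, which makes the lemma self-contained and uses the full equation $\Delta R=cR^2$, not just subharmonicity of $R_+$. Compare this with applying Yau's $L^q$ Liouville theorem \cite{Yau1976} to the nonnegative subharmonic function $R_+$. That only shows $R_+$ is constant, and one then needs the equation again to rule out a positive constant when the volume is finite. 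Your zeroth-order term $c\int\varphi^2R^2\psi_\varepsilon^{q-1}$ forces $R_+\equiv0$ in one step, with no case distinction. The paper's citation simply buys brevity.

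One technical claim in Step~1 is wrong and should be corrected. For $1<q<2$, $\psi_\varepsilon^{q-1}$ is \emph{not} locally Lipschitz. The shift by $\varepsilon$ does not change the behaviour of $t\mapsto t_+^{q-1}$ at $t=0$, so $\nabla(\psi_\varepsilon^{q-1})=(q-1)\psi_\varepsilon^{q-2}\nabla R$ can blow up at $\{R=\varepsilon\}$. Consequently $\varphi^2\psi_\varepsilon^{q-1}$ is not an admissible Lipschitz test function as written, and the finiteness of the two gradient terms you cancel is not automatic.

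A regularization needs a bound on $\Phi^2/\Phi'$, not convexity; in fact $t_+^{q-1}$ is concave on $(0,\infty)$ here. For example, test with $\varphi^2\Phi_\delta(\psi_\varepsilon)$, where $\Phi_\delta(t)=t(t+\delta)^{q-2}$, which is locally Lipschitz. Since $\Phi_\delta'>0$ and $\Phi_\delta^2/\Phi_\delta'\le t^q/(q-1)$, the same Young inequality gives $c\int\varphi^2R^2\Phi_\delta(\psi_\varepsilon)\le\frac{1}{q-1}\int\psi_\varepsilon^q|\nabla\varphi|^2$ with all terms finite. Because $\Phi_\delta(t)\uparrow t^{q-1}$ as $\delta\downarrow0$, monotone convergence recovers your inequality. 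Alternatively, take $\varepsilon$ to be a regular value of $R$ (by Sard) and integrate by parts on $\{R>\varepsilon\}$. With either repair, the rest of your proof goes through verbatim.
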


\begin{proof}
The nonpositivity is \cite[Lemma~5.1]{CMM2023}.  Since $n\ge5$, equation \eqref{EL2} gives $\Delta R\ge0$ wherever $R$ is evaluated as a smooth function satisfying the critical equation.  Once $R\le0$, the strong maximum principle applied to \eqref{EL2} implies that either $R\equiv0$ or $R<0$ on each connected component.  This same reduction is recalled at \cite[Section~2]{CMM2026}.
\end{proof}

By Lemma \ref{lem:sign}, either $R\equiv0$ or $R<0$ on the connected component under consideration.  To prove the theorem, assume for contradiction that
$$
        R<0,
$$
and set
$$
        u=-R>0.
$$
Then \eqref{EL1}--\eqref{EL2} become
\begin{align}
        \Ric_g &= \frac{\Hess_g u}{u}-\frac{3}{4(n-1)}u g, \label{uRic}\\
        \Delta_g u &= -\frac{n-4}{4(n-1)}u^2 . \label{uLap}
\end{align}

Define the conformal metric
\begin{equation}\label{hdef}
        h=u g.
\end{equation}
Let
\begin{equation}\label{fmdef}
        f=\frac{n-4}{2}\log u,
        \qquad
        m=\frac{(n-4)^2}{10-n},
        \qquad
        \lambda=\frac{n-10}{8(n-1)} .
\end{equation}
For $5\leq n\leq 9$ one has $m>0$ and $\lambda<0$.

\begin{proposition}\label{prop:QE}
The metric $h=ug$ satisfies
\begin{equation}\label{QE-f}
        \Ric_h+\Hess_h f-\frac1m df\otimes df=\lambda h.
\end{equation}
Equivalently, if
\begin{equation}\label{vdef}
        v=u^{-\frac{10-n}{2(n-4)}},
\end{equation}
then
\begin{equation}\label{QE-v}
        \Ric_h-m v^{-1}\Hess_h v=\lambda h.
\end{equation}
Moreover,
\begin{equation}\label{weighted-volume}
        e^{-f}\,dV_h=u^2\,dV_g .
\end{equation}
\end{proposition}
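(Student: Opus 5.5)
The plan is a direct computation with the standard conformal transformation formulas, followed by two short algebraic reformulations.

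Write $h=e^{2\varphi}g$ with $\varphi=\tfrac12\log u$. I will use the classical identities
$$
\Ric_h=\Ric_g-(n-2)\bigl(\Hess_g\varphi-d\varphi\otimes d\varphi\bigr)-\bigl(\Delta_g\varphi+(n-2)|\nabla_g\varphi|_g^2\bigr)g
$$
and, for any function $f$,
$$
\Hess_h f=\Hess_g f-d\varphi\otimes df-df\otimes d\varphi+\langle d\varphi,df\rangle_g\, g.
$$
Substituting $d\varphi=du/(2u)$ and $\Hess_g\varphi=\Hess_g u/(2u)-du\otimes du/(2u^2)$, together with $f=\tfrac{n-4}{2}\log u$, every term in $\Ric_h+\Hess_h f-\tfrac1m df\otimes df$ becomes a combination of three tensor types: $\Hess_g u/u$, $du\otimes du/u^2$, and multiples of $g$. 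I then eliminate $\Ric_g$ using \eqref{uRic} and $\Delta_g u$ using \eqref{uLap}, and check that each type of term collapses.

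\emph{Hessian terms.} The coefficients are $1$ from \eqref{uRic}, $-\tfrac{n-2}{2}$ from the conformal Ricci term, and $+\tfrac{n-4}{2}$ from $\Hess_g f$. These sum to $0$.

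\emph{Gradient-squared terms.} The coefficients are $\tfrac{3(n-2)}{4}$ from the Ricci transformation, $-\tfrac{n-4}{2}$ from $\Hess_g f$, $-\tfrac{n-4}{2}$ from the cross terms $-d\varphi\otimes df-df\otimes d\varphi$, and $-\tfrac{(n-4)^2}{4m}=-\tfrac{10-n}{4}$ from the last term. These sum to zero, and this is exactly where the choice of $m$ enters.

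\emph{Terms proportional to $g$.} The $|du|^2/u^2$ coefficients are $\tfrac12-\tfrac{n-2}{4}+\tfrac{n-4}{4}=0$. The remaining pieces are $-\tfrac{3}{4(n-1)}u$ from \eqref{uRic} and $-\tfrac{\Delta_g u}{2u}=\tfrac{(n-4)u}{8(n-1)}$ from \eqref{uLap}. Their sum is $\tfrac{n-10}{8(n-1)}u\,g=\lambda h$, which gives \eqref{QE-f}.

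This bookkeeping of coefficients is the only real obstacle. The delicate points are the symmetrization of the cross terms and remembering that $\Delta_g\varphi$ contributes both a $\Delta_g u$ term and a $|du|^2$ term. I would carry out the check coefficient by coefficient exactly as above.

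For \eqref{QE-v}, observe that $v=e^{-f/m}$, since $f/m=\tfrac{10-n}{2(n-4)}\log u$. Then
$$
\Hess_h v=v\Bigl(-\tfrac1m\Hess_h f+\tfrac1{m^2}df\otimes df\Bigr),
$$
so $-m v^{-1}\Hess_h v=\Hess_h f-\tfrac1m df\otimes df$, and \eqref{QE-v} is equivalent to \eqref{QE-f}.

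Finally, for \eqref{weighted-volume}: $dV_h=u^{n/2}dV_g$ and $e^{-f}=u^{-(n-4)/2}$, so $e^{-f}dV_h=u^2\,dV_g$.
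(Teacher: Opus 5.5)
Your proposal is correct. I checked each coefficient:
\begin{itemize}
\item the Hessian terms give $1-\tfrac{n-2}{2}+\tfrac{n-4}{2}=0$;
\item the $du\otimes du/u^2$ terms give $\tfrac{3(n-2)}{4}-(n-4)-\tfrac{10-n}{4}=0$;
\item the $|du|^2g/u^2$ terms vanish;
\item the zeroth-order terms give $\tfrac{n-10}{8(n-1)}u\,g=\lambda h$.
\end{itemize}
Your $v$-reformulation and volume identity are the same as the paper's.

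The difference lies in how you obtain \eqref{QE-f}. The paper performs no computation. It specializes the one-parameter conformal identity of \cite[Proposition~2.1]{CMM2026} for $\widetilde g=u^{2k}g$, $F=[(n-2)k-1]\log u$, to $k=1/2$. At that value the exponential factor on the right-hand side equals $1$, and the two coefficients reduce to $1/m$ and $\lambda$.

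The paper's route is shorter. It also places $h$ in the same family as the metric $u^{6/(n-4)}g$ (the case $k=3/(n-4)$) used for $n\ge10$, and it shows that $k=1/2$ is exactly the choice that makes the right-hand side a constant multiple of the metric.

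Your route is self-contained and does not depend on the normalization conventions of the cited formula. It also shows where each input enters:
\begin{itemize}
\item the Hessian term in \eqref{uRic} cancels the Hessian terms from the conformal change;
\item the specific value of $m$ cancels the $du\otimes du$ terms;
\item the zeroth-order parts of \eqref{uRic} and \eqref{uLap} combine to produce $\lambda$.
\end{itemize}
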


\begin{proof}
We use the conformal formula \cite[Proposition~2.1]{CMM2026}.  In the notation of that proposition, for any $k\neq0$ with $(n-2)k\neq1$, the conformal metric
$$
        \widetilde g=u^{2k}g,
        \qquad
        F=[(n-2)k-1]\log u,
$$
satisfies
\begin{align*}
\Ric_{\widetilde g}+\Hess_{\widetilde g}F
&-\frac{1+2k-(n-2)k^2}{[(n-2)k-1]^2}\,dF\otimes dF  \\
&=\frac{(n-4)k-3}{4(n-1)}
  \exp\!\left(\frac{1-2k}{(n-2)k-1}F\right)\widetilde g .
\end{align*}
Taking $k=1/2$ gives $\widetilde g=h$, $F=f$, and the exponential factor is equal to $1$.  Moreover,
$$
        \frac{1+2k-(n-2)k^2}{[(n-2)k-1]^2}
        =\frac{10-n}{(n-4)^2}=\frac1m,
$$
while
$$
        \frac{(n-4)k-3}{4(n-1)}=\frac{n-10}{8(n-1)}=\lambda .
$$
This proves \eqref{QE-f}.  Since $f=-m\log v$, we have
$$
        \Hess_h f-\frac1m df\otimes df
        =-m\left(\Hess_h\log v+d\log v\otimes d\log v\right)
        =-m v^{-1}\Hess_h v,
$$
which gives \eqref{QE-v}.  Finally, because $h=ug$,
$$
        dV_h=u^{n/2}dV_g,
$$
and hence
$$
        e^{-f}dV_h=u^{-(n-4)/2}u^{n/2}dV_g=u^2dV_g .
$$
\end{proof}

It will be useful to introduce also
\begin{equation}\label{wdef}
        w=\log v=-\frac{10-n}{2(n-4)}\log u .
\end{equation}
Then $f=-mw$, and the weighted Laplacian associated with $(h,e^{-f}dV_h)$ is
\begin{equation}\label{Delf}
        \Delta_f^h \psi=\Delta_h\psi-\ip{\nabla_h f}{\nabla_h\psi}_h
        =\Delta_h\psi+m\ip{\nabla_h w}{\nabla_h\psi}_h .
\end{equation}

\begin{lemma}\label{lem:Delfw}
The function $w$ satisfies
\begin{equation}\label{Delfw}
        \Delta_f^h w=-\lambda .
\end{equation}
Consequently, if
$$
        G=\norm{\nabla_h w}_h^2,
$$
then
\begin{equation}\label{BochnerG}
        \frac12\Delta_f^h G
        =\norm{\Hess_h w}_h^2+G(mG+\lambda).
\end{equation}
\end{lemma}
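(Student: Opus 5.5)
The identity \eqref{Delfw} can be checked directly from the conformal change $h=ug$. Then \eqref{BochnerG} follows from the weighted Bochner formula combined with the quasi-Einstein equation \eqref{QE-f}.

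\emph{Step 1: the weighted Laplacian of $w$.} Write $h=e^{2\varphi}g$ with $\varphi=\tfrac12\log u$. The standard conformal formula for the Laplacian gives, for any function $\psi$,
$$
\Delta_h\psi=u^{-1}\Bigl(\Delta_g\psi+\tfrac{n-2}{2}\ip{\nabla_g\log u}{\nabla_g\psi}_g\Bigr),
\qquad
\norm{\nabla_h\psi}_h^2=u^{-1}\norm{\nabla_g\psi}_g^2 .
$$
Set $c=-\frac{10-n}{2(n-4)}$, so that $w=c\log u$. Using $\Delta_g\log u=\Delta_g u/u-\norm{\nabla_g u}_g^2/u^2$ and \eqref{Delf}, we get
$$
\Delta_f^h w
=u^{-1}\Bigl[c\,\frac{\Delta_g u}{u}+c\Bigl(-1+\tfrac{n-2}{2}+mc\Bigr)\frac{\norm{\nabla_g u}_g^2}{u^2}\Bigr].
$$
The key cancellation is $mc=-\frac{n-4}{2}$. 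With this, the gradient coefficient becomes $-1+\frac{n-2}{2}-\frac{n-4}{2}=0$. Inserting \eqref{uLap} then yields
$$
\Delta_f^h w=c\cdot\Bigl(-\frac{n-4}{4(n-1)}\Bigr)=\frac{10-n}{8(n-1)}=-\lambda .
$$
The only point needing care is tracking the constants. The choice of exponent in \eqref{vdef} is exactly what kills the $\norm{\nabla u}^2$ term.

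As a cross-check, one can instead trace \eqref{QE-v}. Since $\Delta_f^h w=\Delta_h w+m\norm{\nabla_h w}_h^2=v^{-1}\Delta_h v$, the trace gives $R_h-m\,\Delta_f^h w=n\lambda$. This route requires computing $R_h$ separately, so the direct computation above is preferable.

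\emph{Step 2: the Bochner identity.} The weighted Bochner formula on $(M,h,e^{-f}dV_h)$ reads
$$
\tfrac12\Delta_f^h\norm{\nabla_h w}_h^2=\norm{\Hess_h w}_h^2+\ip{\nabla_h w}{\nabla_h\Delta_f^h w}_h+(\Ric_h+\Hess_h f)(\nabla_h w,\nabla_h w).
$$
It follows from the usual Bochner formula together with the commutation $\ip{\nabla\psi}{\nabla\ip{\nabla f}{\nabla\psi}}=\Hess f(\nabla\psi,\nabla\psi)+\Hess\psi(\nabla f,\nabla\psi)$. By Step 1, $\Delta_f^h w$ is constant, so the middle term vanishes.

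By \eqref{QE-f} and $f=-mw$,
$$
\Ric_h+\Hess_h f=\lambda h+\tfrac1m\,df\otimes df=\lambda h+m\,dw\otimes dw .
$$
Evaluating on $\nabla_h w$ gives $\lambda G+mG^2$, which is \eqref{BochnerG}.

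Neither step poses a genuine obstacle. The main risk is a sign or constant error in Step 1, which I would guard against with the trace cross-check mentioned there.
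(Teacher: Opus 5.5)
Your proposal is correct and follows the paper's own proof: the conformal Laplacian formula for $h=ug$, where the cancellation $mc=-\frac{n-4}{2}$ kills the $\norm{\nabla_g u}_g^2$ term, followed by the weighted Bochner formula combined with \eqref{QE-f} and $f=-mw$. One side remark is wrong, though your proof does not use it: since $w=\log v$ gives $v^{-1}\Delta_h v=\Delta_h w+G$, the identity $\Delta_h w+m\norm{\nabla_h w}_h^2=v^{-1}\Delta_h v$ holds only when $m=1$ (i.e.\ $n=6$), and the trace of \eqref{QE-v} actually reads $R_h-m\Delta_f^h w+m(m-1)G=n\lambda$, so it would not work as the cross-check you intended.
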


\begin{proof}
Let
$$
        a_n=\frac{n-4}{4(n-1)},
        \qquad
        \beta=\frac{10-n}{2(n-4)},
$$
so that $w=-\beta\log u$ and $\Delta_g u=-a_nu^2$.  Since $h=ug=e^{2\varphi}g$ with $\varphi=\frac12\log u$, the Laplacian transforms as
$$
        \Delta_h \psi
        =u^{-1}\left(\Delta_g\psi+\frac{n-2}{2}\ip{\nabla_g\log u}{\nabla_g\psi}_g\right).
$$
Applying this to $w=-\beta\log u$ gives
\begin{align*}
        \Delta_h w
        &= -\beta u^{-1}\left(\Delta_g\log u+\frac{n-2}{2}\norm{\nabla_g\log u}_g^2\right) \\
        &= \beta a_n-\frac{10-n}{4}u^{-1}\norm{\nabla_g\log u}_g^2 .
\end{align*}
On the other hand,
$$
        m\norm{\nabla_h w}_h^2
        =m\beta^2u^{-1}\norm{\nabla_g\log u}_g^2
        =\frac{10-n}{4}u^{-1}\norm{\nabla_g\log u}_g^2 .
$$
Therefore
$$
        \Delta_f^h w=\Delta_h w+m\norm{\nabla_h w}_h^2
        =\beta a_n
        =\frac{10-n}{8(n-1)}=-\lambda .
$$
For \eqref{BochnerG}, use the weighted Bochner formula
$$
        \frac12\Delta_f^h G
        =\norm{\Hess_h w}_h^2+\bigl(\Ric_h+\Hess_h f\bigr)(\nabla_h w,\nabla_h w)
        +\ip{\nabla_h w}{\nabla_h\Delta_f^h w}_h .
$$
Since \eqref{Delfw} is constant, the last term vanishes.  From \eqref{QE-f} and $df=-m\,dw$,
$$
        \Ric_h+\Hess_h f=\lambda h+\frac1m df\otimes df
        =\lambda h+m\,dw\otimes dw .
$$
Thus
$$
        \bigl(\Ric_h+\Hess_h f\bigr)(\nabla_h w,\nabla_h w)
        =\lambda G+mG^2,
$$
which proves \eqref{BochnerG}.
\end{proof}

\section{A Logarithmic growth lemma}\label{sec:log}

The next elementary lemma is useful in the completeness argument.

\begin{lemma}[Logarithmic growth lemma]\label{lem:log}
Let $v>0$ be a $C^2$ function on $(0,T)$, where $0<T<\infty$.  Assume that there exist constants $\rho>0$ and $b\geq 0$ such that
\begin{equation}\label{log-assumption}
        \rho\int_0^T \frac{v''}{v}\phi^2\,dt
        \leq
        \int_0^T (\phi')^2\,dt+b\int_0^T \phi^2\,dt
\end{equation}
for every $\phi\in C_c^\infty(0,T)$.  Set
\begin{equation}\label{prho}
        p_\rho=\frac{\sqrt{1+\rho^{-1}}-1}{2},
\end{equation}
so that $\rho p_\rho(p_\rho+1)=1/4$.  Then
\begin{equation}\label{growth-conclusion}
        \limsup_{t\to T^-}
        \frac{\log v(t)}{\log \frac1{T-t}}
        \leq p_\rho .
\end{equation}
Consequently, for every $a>0$ such that $a p_\rho<1$, one has
\begin{equation}\label{integrability-conclusion}
        \int^{T} v(t)^a\,dt<\infty .
\end{equation}
\end{lemma}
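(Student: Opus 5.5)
The plan is to compare \eqref{log-assumption} with the explicit model $v_0(t)=(T-t)^{-p}$, for which $v_0''/v_0=p(p+1)(T-t)^{-2}$. The sharp one-dimensional Hardy inequality $\int(\phi')^2\ge\frac14\int\phi^2(T-t)^{-2}$ has constant exactly $1/4$, so the model is compatible with \eqref{log-assumption} only if $\rho p(p+1)\le 1/4$, i.e.\ $p\le p_\rho$. This is the source of the threshold \eqref{prho}.

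The integrability claim \eqref{integrability-conclusion} follows at once from \eqref{growth-conclusion}. If $ap_\rho<1$, pick $p'>p_\rho$ with $ap'<1$. Then $v(t)\le (T-t)^{-p'}$ near $T$, so $v^a\le (T-t)^{-ap'}$, which is integrable at $T$. So all the work is in \eqref{growth-conclusion}.

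To prove \eqref{growth-conclusion} for a general $v$, I would convert \eqref{log-assumption} into a Caccioppoli-type inequality by testing with $\phi=v^{q}\eta$, where $\eta\in C_c^\infty(0,T)$ and $q>0$ is a parameter to be tuned. Integrating the left side by parts,
$$
\int v''v^{2q-1}\eta^2=-(2q-1)\int v^{2q-2}(v')^2\eta^2-2\int v^{2q-1}v'\eta\eta',
$$
and expanding $(\phi')^2=(qv^{q-1}v'\eta+v^q\eta')^2$, one gets
$$
\bigl(\rho(1-2q)-q^2\bigr)\int v^{2q-2}(v')^2\eta^2\le 2(\rho+q)\int v^{2q-1}v'\eta\eta'+\int v^{2q}(\eta')^2+b\int v^{2q}\eta^2 .
$$
Writing $Y=v^{q}$, so that $v^{2q-2}(v')^2=q^{-2}(Y')^2$, and absorbing the cross term by Cauchy--Schwarz in the optimal way, this should become
$$
\int (Y')^2\eta^2\le K(q,\rho)\int Y^2(\eta')^2+C b\int Y^2\eta^2,
$$
provided $\rho(1-2q)-q^2>0$. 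Here $K(q,\rho)$ is explicit, and I expect it to tend to $4$ (the reciprocal Hardy constant) exactly in the borderline regime associated with $p_\rho$.

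Next I would argue by contradiction. Suppose $\log v(t_j)\ge p\log\frac1{T-t_j}$ along a sequence $t_j\to T$, for some $p>p_\rho$. I would insert power-type cutoffs $\eta(t)=(T-t)^{\gamma}$ on $[t_0,T-\delta]$, cut off linearly near both ends, and let $\delta\to0$. The Hardy inequality shows that $\int Y^2(T-t)^{2\gamma-2}$ is controlled by $\int (Y')^2(T-t)^{2\gamma}$ with constant $(2\gamma-1)^{-2}\cdot 4$. Combined with the Caccioppoli inequality, the boundary terms at $T-\delta$ must stay bounded, which forces $\int Y^2(T-t)^{2\gamma-2}<\infty$ for an explicit range of $\gamma$. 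Together with a monotonicity or convexity step, this yields pointwise growth $Y\lesssim (T-t)^{-(\text{const})}$, i.e.\ $v\lesssim (T-t)^{-p''}$ with $p''$ arbitrarily close to $p_\rho$ after optimizing $q$ and $\gamma$. The pointwise step could use that $v''\ge0$ in an averaged sense, or a Sobolev/mean-value estimate on dyadic intervals $[T-2s,T-s]$, since $Y'\in L^2$ with weight. The $b$-term is harmless because near $T$ it is dominated by $(T-t)^{-2}$.

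I expect the main obstacle to be the sharp bookkeeping of constants: choosing $q$ and $\gamma$ so that the threshold obtained is exactly $p_\rho$ and not a weaker exponent. If the direct Caccioppoli route loses constants, my fallback is a Riccati/Sturm comparison. Inequality \eqref{log-assumption} says the operator $-\frac{d^2}{dt^2}-\rho\,v''/v+b$ is nonnegative near $T$. I would then compare $z=\log v$, which satisfies $v''/v=z''+(z')^2$, against the model $z_0=p\log\frac1{T-t}$, and test \eqref{log-assumption} with $\phi=v^{q}(T-t)^{\gamma}$ truncated at the level sets of $v$ to produce a violating test function whenever $p>p_\rho$.
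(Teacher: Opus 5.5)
\textbf{This proposal has a genuine gap.} Reducing the integrability claim to the growth bound is fine, but the route you propose for the growth bound has two problems.

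\emph{The absorption step is lossy for $q>0$.} Absorbing the cross term $2(\rho+q)\int v^{2q-1}v'\eta\eta'$ by Cauchy--Schwarz loses a full unit in the exponent, and no bookkeeping can recover $p_\rho$. For growing $v$ and the test functions that matter ($v^q\eta\approx (T-t)^{1/2}$, hence $\eta'<0$), this term is negative, and you bound it by a positive quantity. To see the loss, put $x=T-t$ and take the model $v=x^{-p}$, $Y=x^{-s}$, $s=pq$.
\begin{itemize}
\item The weighted Hardy inequality $\int x^{-2s}(\eta')^2\ge(s+\frac12)^2\int x^{-2s-2}\eta^2$ shows that $\int(Y')^2\eta^2\le K\int Y^2(\eta')^2$ can fail only if $s^2>K(s+\frac12)^2$.
\item In particular, if $K\ge1$ (you expect $K\to4$), the inequality holds for every power and contains no growth information at all.
\item The optimal absorption gives $\sqrt K=q(\rho+q+\sqrt{\rho^2+\rho})/(\rho(1-2q)-q^2)$. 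The model is then excluded only when $p>\sqrt K/(2q(1-\sqrt K))$. This threshold is increasing in $q$, and its limit as $q\to0^+$ is $\frac{1+\sqrt{1+\rho^{-1}}}{2}=p_\rho+1$.
\end{itemize}
Hence $v=(T-t)^{-p}$ with $p_\rho<p\le p_\rho+1$ is monotone and convex and satisfies every inequality your route extracts. Yet it violates the claimed conclusion.

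\emph{The passage to a pointwise bound is not supplied.} A Caccioppoli--Hardy combination controls interior integrals by the cutoff region near $T$. That is exactly where $Y$ is large and unknown, so nothing forces ``the boundary terms at $T-\delta$'' to stay bounded. Also, the hypothesis only bounds $\int(v''/v)\phi^2$ from above, so no convexity or monotonicity of $v$ is available.

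\emph{The missing idea.} Your Riccati fallback only gestures at it: keep the completed square and use a test function independent of $v$. This is the paper's proof. Set $V(x)=v(T-x)$ and $y=(\log V)'$.
\begin{enumerate}
\item Your own computation at $q=0$, before absorbing anything, is exactly $\rho\int(y\phi-\phi')^2\le(1+\rho)\int(\phi')^2+b\int\phi^2$.
\item Take the Hardy extremal $\phi=x^{1/2}\chi_{r,R}$ with $\chi_{r,R}=1$ on $[2r,R/2]$. Then $\int(\phi')^2=\frac14\log\frac Rr+O(1)$, and $y\phi-\phi'=x^{1/2}(y-\frac1{2x})$ on that interval. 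Hence $\int_{2r}^{R/2}x(y-\frac1{2x})^2\,dx\le\frac{1+\rho}{4\rho}\log\frac Rr+O(1)$.
\item Cauchy--Schwarz against $\int_{2r}^{R/2}dx/x$ bounds $\int_{2r}^{R/2}(y-\frac1{2x})\,dx$, which equals $\log V(R/2)-\log V(2r)-\frac12\log\frac R{4r}$.
\item This yields directly $\log V(2r)\le\bigl(\sqrt{(1+\rho)/(4\rho)}-\frac12+o(1)\bigr)\log\frac1r=(p_\rho+o(1))\log\frac1r$.
\end{enumerate}
The bound is pointwise because the integrated quantity is an exact increment of $\log V$. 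It is sharp because for the model ($y=-p/x$) the terms $y\phi$ and $-\phi'$ have the same sign, so the square loses nothing.
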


\begin{proof}
Put $x=T-t$ and $V(x)=v(T-x)$.  Let
$$
        y=(\log V)' .
$$
Then $v''(t)/v(t)=V''(x)/V(x)=y'+y^2$.  Rewriting \eqref{log-assumption} in the $x$ variable gives
$$
        \rho\int_0^T (y'+y^2)\phi^2\,dx
        \leq
        \int_0^T (\phi')^2\,dx+b\int_0^T\phi^2\,dx .
$$
For compactly supported smooth $\phi$,
\begin{equation}\label{square-identity}
        \int (y'+y^2)\phi^2
        =\int (y\phi-\phi')^2-\int (\phi')^2 .
\end{equation}
Hence
\begin{equation}\label{square-bound}
        \rho\int_0^T (y\phi-\phi')^2\,dx
        \leq
        (1+\rho)\int_0^T(\phi')^2\,dx+b\int_0^T\phi^2\,dx .
\end{equation}

Fix $R\in(0,T)$ sufficiently small.  Given $0<r<R/8$, choose a smooth cutoff $\chi_{r,R}$ such that
$$
\begin{array}{ll}
        \chi_{r,R}=1 &\text{on }[2r,R/2],\\
        \chi_{r,R}=0 &\text{near }0\text{ and near }R,
\end{array}
$$
with
$$
        |\chi_{r,R}'|\leq C/r\quad\text{on }[r,2r],
        \qquad
        |\chi_{r,R}'|\leq C/R\quad\text{on }[R/2,R],
$$
and analogous second-derivative bounds.  The function
$$
        \phi(x)=x^{1/2}\chi_{r,R}(x)
$$
is smooth and compactly supported in $(0,T)$, because $\chi_{r,R}$ is identically zero near $x=0$.  Equivalently, one may start from the ideal piecewise-smooth cutoff and approximate it in $H_0^1(0,T)$.  Substitution in \eqref{square-bound} yields
$$
        \int_0^T(\phi')^2\,dx
        =\frac14\log\frac{R}{r}+O(1),
        \qquad
        \int_0^T\phi^2\,dx=O(R^2),
$$
where $R$ is fixed and the constants in $O(1)$ are independent of $r$.  Since $R$ is fixed, the term $bO(R^2)$ is also independent of $r$ and disappears after division by $\log(1/r)$ and passage to the limit $r\to0^+$.  On $[2r,R/2]$,
$$
        y\phi-\phi'=x^{1/2}\left(y-\frac1{2x}\right).
$$
Therefore
\begin{equation}\label{central-bound}
        \int_{2r}^{R/2} x\left(y-\frac1{2x}\right)^2\,dx
        \leq
        \frac{1+\rho}{4\rho}\log\frac{R}{r}+O(1).
\end{equation}
By Cauchy--Schwarz,
\begin{align*}
&\log V(2r)-\log V(R/2)+\frac12\log\frac{R/2}{2r} \\
&\qquad
= -\int_{2r}^{R/2}\left(y-\frac1{2x}\right)\,dx \\
&\qquad
\leq
\left|\int_{2r}^{R/2}\left(y-\frac1{2x}\right)\,dx\right| \\
&\qquad
\leq
\left(\int_{2r}^{R/2}x\left(y-\frac1{2x}\right)^2\,dx\right)^{1/2}
\left(\int_{2r}^{R/2}\frac{dx}{x}\right)^{1/2} \\
&\qquad
\leq
\sqrt{\frac{1+\rho}{4\rho}}\log\frac{R/2}{2r}+o\!\left(\log\frac1r\right)
\end{align*}
as $r\to0^+$.  Dividing by $\log(1/r)$ and letting $r\to0^+$ first gives
$$
        \limsup_{r\to0^+}\frac{\log V(2r)}{\log(1/r)}
        \leq
        \sqrt{\frac{1+\rho}{4\rho}}-\frac12
        =p_\rho .
$$
Replacing $2r$ by $s$ does not change the limsup, since
$\log(1/r)=\log(1/s)+\log 2$ and hence
$\log(1/r)/\log(1/s)\to1$ as $s\to0^+$.  Therefore
$$
        \limsup_{s\to0^+}\frac{\log V(s)}{\log(1/s)}
        \leq p_\rho .
$$
This is \eqref{growth-conclusion}.  If $a>0$ and $a p_\rho<1$, choose $\varepsilon>0$ with $a(p_\rho+\varepsilon)<1$.  From \eqref{growth-conclusion}, after decreasing the terminal interval if necessary, one has
$$
        v(t)\leq C_\varepsilon (T-t)^{-p_\rho-\varepsilon}
        \qquad\text{for }t\text{ sufficiently close to }T.
$$
Thus
$$
        \int^T v(t)^a\,dt
        \leq
        C\int^T (T-t)^{-a(p_\rho+\varepsilon)}\,dt<\infty .
$$
\end{proof}

\section{Completeness of the conformal metric \texorpdfstring{$h=ug$}{h=ug}}\label{sec:completeness}

We now prove that the conformal metric $h=ug$ is complete.  We first record a standard limiting-geodesic lemma in a form adapted to the present situation.

\begin{lemma}[Finite conformal geodesic with infinite background length]\label{lem:finite-geodesic}
Let $(M,g)$ be a connected complete Riemannian manifold, let $a\in C^\infty(M)$ be positive, and set $h=a g$.  If $h$ is incomplete, then there exist $0<T<\infty$ and an $h$-unit-speed geodesic
$$
        \sigma:[0,T)\to M
$$
such that:
\begin{enumerate}[label=\textup{(\roman*)}]
\item every compact subinterval of $[0,T)$ is minimizing for $h$;
\item the $g$-length of $\sigma$ is infinite, namely
$$
        \int_0^T |\dot\sigma(t)|_g\,dt=\infty .
$$
\end{enumerate}
Consequently, for every $\phi\in C_c^\infty(0,T)$,
\begin{equation}\label{index-ineq-general}
        \int_0^T\left[(n-1)(\phi')^2-\Ric_h(\dot\sigma,\dot\sigma)\phi^2\right]dt\geq 0 .
\end{equation}
\end{lemma}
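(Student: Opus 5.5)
The plan is to build $\sigma$ as a limit of minimizing $h$-geodesics issuing from a fixed point $p$ toward points that escape every $g$-compact set while staying at bounded $h$-distance. Since $h$ is incomplete, the metric space $(M,d_h)$ is not complete. Hopf--Rinow then gives a bounded closed set that is not compact, so there is a sequence $q_k$ with $d_h(p,q_k)\le D$ and no convergent subsequence. Completeness of $g$ forces $d_g(p,q_k)\to\infty$ after passing to a subsequence: if the $q_k$ stayed in a $g$-ball, they would lie in a compact set, and the $h$- and $g$-topologies coincide.

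Next we pick minimizing pieces. Take $h$-geodesics $\gamma_k$ from $p$ with $h$-unit speed and length $L_k\le d_h(p,q_k)+1/k$. Each is minimizing between its endpoints wherever it is defined; if a minimizer from $p$ to $q_k$ does not exist, we instead use nearly minimizing curves followed by local minimizing geodesics, or simply choose the $q_k$ inside the domain of $\exp_p^h$ to avoid this issue. Compactness of the $h$-unit sphere at $p$ lets us pass to a subsequence with $\dot\gamma_k(0)\to\xi$ and $L_k\to L\le D+1$. Let $\sigma(t)=\exp^h_p(t\xi)$ be defined on its maximal interval $[0,T)$.

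The key step is to show $T\le L<\infty$ and that $\sigma$ has infinite $g$-length. Suppose $\sigma$ extended past some $t_0<L$ in the relevant sense. On each compact $[0,t_1]\subset[0,T)$ with $t_1<L$, the $\gamma_k$ converge to $\sigma$ in $C^1$ by ODE continuity, and minimality passes to the limit, which gives (i). Now assume the $g$-length of $\sigma|_{[0,T)}$ is finite. Then $\sigma([0,T))$ lies in a $g$-ball, which is compact by completeness of $g$. On that compact set $a$ is bounded above and below, so the $h$-geodesic flow has a uniform existence time there. It follows that $\sigma$ extends beyond $T$ if $T<L$. If instead $T\ge L$, the $C^1$-convergence argument extended to $[0,L]$ puts $q_k=\gamma_k(L_k)$ within a compact neighborhood, contradicting the choice of $q_k$. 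Handling both alternatives carefully, the obstacle case is ruling out $T=\infty$, and the bound $T\le L$ comes from the same escape argument. I expect this bookkeeping, namely making sure that either $q_k$ converge or $\sigma$ escapes, to be the main obstacle. A cleaner route is to choose the index $k$ and argue on $[0,\min(T,L))$, defining $T$ as the first time where $C^1$-convergence fails.

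Finally, (ii) gives \eqref{index-ineq-general} by the standard second-variation argument. Fix $\phi\in C_c^\infty(0,T)$ with support in some $[t_0,t_1]$ on which $\sigma$ is minimizing. Take an $h$-parallel orthonormal frame $E_1,\dots,E_{n-1}$ normal to $\dot\sigma$. Minimality makes the index form nonnegative on each variation field $\phi E_i$, that is, $\int[(\phi')^2-\phi^2\,\mathrm{Rm}_h(E_i,\dot\sigma,\dot\sigma,E_i)]\,dt\ge0$. Summing over $i$ gives $\int[(n-1)(\phi')^2-\Ric_h(\dot\sigma,\dot\sigma)\phi^2]\,dt\ge0$.
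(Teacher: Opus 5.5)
\textbf{There is a genuine gap: the minimizing geodesics $\gamma_k$ from $p$ to $q_k$ need not exist.} Your proof of (i) passes to the limit in minimizing (or $1/k$-almost minimizing) $h$-\emph{geodesics} from $p$ to $q_k$. For incomplete $h$ such geodesics need not exist, and your $q_k$ are chosen precisely to run into the incomplete end.

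Here is a counterexample within the lemma's hypotheses. Take $M=\mathbb R^2\setminus\{0\}$ with the complete metric $g=|x|^{-2}g_{\mathrm{euc}}$ (a flat cylinder) and $a=|x|^2$, so $h=g_{\mathrm{euc}}$. The points $p=(1,0)$ and $q_k=(-1/k,0)$ are $h$-bounded and have no convergent subsequence, yet no $h$-geodesic whatsoever joins $p$ to $q_k$.

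Your fallbacks do not close the gap. Nearly minimizing \emph{curves} are not geodesics, so the ODE-continuity/$C^1$-convergence step has nothing to act on. Taking $q_k$ in the image of $\exp^h_p$ gives geodesics, but with no minimality, and minimality is exactly what (i) requires.

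Note that (ii) is the cheap part. Any maximal $h$-geodesic with $T<\infty$ leaves every compact set, hence every (compact) $g$-ball, so it automatically has infinite $g$-length. The whole content of the lemma is producing such a geodesic that is also minimizing.

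\textbf{How the paper avoids this.} It passes to the complete metrics $h_i=(a+\eta_i)g$, which agree with $h$ on $\overline{B^g_{r_i}}$. An $h_i$-minimizer from $o$ to the compact sphere $\partial B^g_{r_i}$ then exists, stays in $\overline{B^g_{r_i}}$, and is $h$-minimizing. The limit is taken in $g$-arclength, so infinite $g$-length is automatic.

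\textbf{How your scheme can be repaired.} Choose the $q_k$ differently.
Let $r_0=\sup\{r:\{d_h(p,\cdot)\le r\}\text{ is compact}\}$, which is finite because $h$ is incomplete.
The open ball $B^h_{r_0}(p)$ is not precompact, since otherwise a compact collar around its closure would enlarge $r_0$. So choose $q_k$ in $B^h_{r_0}(p)$ with no convergent subsequence.
Each $q_k$ lies in a ball $B^h_r(p)$ with $r<r_0$ and compact closure. Arzel\`a--Ascoli applied to a minimizing sequence of curves then gives a minimizing $h$-geodesic from $p$ to $q_k$.

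With this choice your limit argument goes through:
if $T>L$, continuous dependence on initial data forces $q_k\to\sigma(L)$, a contradiction, so $T\le L<\infty$;
minimality passes to the limit on $[0,t]$ for each $t<T$;
(ii) follows from maximality.
You need the strict inequality $T>L$ here, not $T\ge L$. Finally, it is (i), not (ii), that feeds the second-variation step.
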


\begin{proof}
Since $h$ is incomplete, the Hopf--Rinow theorem (equivalently the equivalence between metric completeness and geodesic completeness in finite-dimensional Riemannian geometry), gives a maximal $h$-unit-speed geodesic $\zeta:[0,T_*)\to M$ with $T_*<\infty$.  If $\zeta$ were contained in a compact subset of $M$, then $(\zeta,\dot\zeta)$ would remain in a compact subset of $TM$, because $|\dot\zeta|_h\equiv1$; the smooth geodesic equation for $h$ would then extend it past $T_*$ by the standard ODE extension theorem, a contradiction.  Thus $\zeta$ leaves every compact subset of $M$.  Since $M$ is connected, after joining the initial point of $\zeta$ to a fixed reference point $o\in M$, there is a finite $h$-length curve starting at $o$ and leaving every compact subset of $M$.

Let $B_r=B_r^g(o)$.  Since $(M,g)$ is complete, Hopf--Rinow implies that the closed $g$-balls $\overline{B_r}$, and hence the distance spheres $\partial B_r$, are compact.  Set
$$
        D(r)=d_h(o,\partial B_r).
$$
The preceding escaping curve has finite $h$-length, say at most $L_0$.  Since its $g$-distance from $o$ is unbounded, for every sufficiently large $r$ it meets $\partial B_r$ by continuity of $x\mapsto d_g(o,x)$.  Hence
\begin{equation}\label{Dr-bound}
        D(r)\leq L_0
\end{equation}
for all sufficiently large $r$.  Choose a sequence $r_i\to\infty$ for which \eqref{Dr-bound} holds.

For each $i$, choose $\eta_i\in C^\infty(M)$ satisfying
$$
        \eta_i\geq0,
        \qquad
        \eta_i\equiv0\ \text{on }\overline{B_{r_i}},
        \qquad
        \eta_i\equiv1\ \text{on }M\setminus B_{r_i+1}.
$$
Such a function exists by the smooth Urysohn lemma, since the closed sets $\overline{B_{r_i}}$ and $M\setminus B_{r_i+1}$ are disjoint.  Put
$$
        h_i=(a+\eta_i)g .
$$
The metric $h_i$ is complete.  Indeed, on the compact set $\overline{B_{r_i+1}}$ the positive function $a+\eta_i$ has a positive minimum $c_i>0$, while on $M\setminus B_{r_i+1}$ one has $a+\eta_i\geq1$.  Thus $h_i\geq c_i' g$ on all of $M$, where $c_i'=\min\{c_i,1\}>0$.  Since $g$ is complete, this lower uniform comparison implies completeness of $h_i$.

We first compare the $h_i$- and $h$-distances from $o$ to $\partial B_{r_i}$.  Any curve from $o$ to $\partial B_{r_i}$ has an initial segment, ending at its first intersection with $\partial B_{r_i}$, contained in $\overline{B_{r_i}}$, where $h_i=h$.  It follows that
\begin{equation}\label{boundary-distance-equality}
        d_{h_i}(o,\partial B_{r_i})=d_h(o,\partial B_{r_i})=D(r_i).
\end{equation}
Indeed, the first-hit argument gives $d_{h_i}(o,\partial B_{r_i})\geq D(r_i)$; the reverse inequality follows by approximating $D(r_i)$ by curves whose first boundary-hit segment is contained in $\overline{B_{r_i}}$.

Since $h_i$ is complete and $\partial B_{r_i}$ is compact, the distance from $o$ to $\partial B_{r_i}$ is attained; another application of Hopf--Rinow gives a minimizing $h_i$-geodesic.  Hence there exist a point $x_i\in\partial B_{r_i}$ and an $h_i$-minimizing geodesic $\gamma_i$ from $o$ to $x_i$ such that
$$
        L_{h_i}(\gamma_i)=d_{h_i}(o,\partial B_{r_i})=D(r_i)\leq L_0.
$$
The curve $\gamma_i$ is contained in $\overline{B_{r_i}}$.  Otherwise, if $y$ were the first point at which $\gamma_i$ hits $\partial B_{r_i}$ before the endpoint $x_i$, then the initial segment from $o$ to $y$ would have $h_i$-length strictly smaller than $d_{h_i}(o,\partial B_{r_i})$, contradicting the definition of the latter.  Hence $h_i=h$ along $\gamma_i$.

We next prove that each initial segment of $\gamma_i$ is minimizing for $h$.  Let $p$ be a point of $\gamma_i$.  Since $\gamma_i$ is globally minimizing for $h_i$, the initial segment $\gamma_i|_{o,p}$ is also globally minimizing for $h_i$; otherwise a shorter $h_i$-curve from $o$ to $p$ would shorten the whole curve from $o$ to $x_i$.  Suppose that there were an $h$-curve $\alpha$ from $o$ to $p$ with
$$
        L_h(\alpha)<L_h(\gamma_i|_{o,p}).
$$
If $\alpha$ remains in $\overline{B_{r_i}}$, then $L_{h_i}(\alpha)=L_h(\alpha)$, contradicting the $h_i$-minimality of $\gamma_i|_{o,p}$.  If $\alpha$ leaves $B_{r_i}$ before reaching $p$, let $y$ be its first intersection with $\partial B_{r_i}$.  Since $\alpha|_{o,y}$ lies in $\overline{B_{r_i}}$,
$$
        D(r_i)\leq L_h(\alpha|_{o,y})<L_h(\alpha)<L_h(\gamma_i|_{o,p})\leq D(r_i),
$$
again a contradiction.  The remaining possibility, when the first intersection with $\partial B_{r_i}$ is exactly the endpoint $p=x_i$, reduces to the first case because the curve up to its endpoint lies in $\overline{B_{r_i}}$.  Therefore every initial segment of $\gamma_i$ is $h$-minimizing.  Consequently every subsegment of $\gamma_i$ is also $h$-minimizing, because a shorter replacement of a subsegment would shorten a minimizing initial segment.

Parametrize $\gamma_i$ by $g$-arclength $s\in[0,S_i]$.  Since $x_i\in\partial B_{r_i}$, one has $S_i\geq r_i\to\infty$.  For every fixed $S>0$, the images $\gamma_i([0,S])$ lie in the compact $g$-ball $\overline{B_S}$, and for $i$ sufficiently large this compact ball is contained in $B_{r_i}$; hence on $[0,S]$ the curves are unparametrized $h$-geodesics.

Let $\varphi=(1/2)\log a$, so that $h=e^{2\varphi}g$.  An unparametrized $h$-geodesic written in $g$-unit-speed parameter satisfies the smooth pregeodesic equation
\begin{equation}\label{pregeodesic-equation}
        \nabla^g_s\dot\gamma_i
        =\nabla^g\varphi-\ip{\nabla^g\varphi}{\dot\gamma_i}_g\dot\gamma_i .
\end{equation}
On each compact ball $\overline{B_S}$ the right-hand side has uniformly bounded smooth coefficients.  Passing to a subsequence of the initial velocities and using ODE compactness, then diagonalizing in $S$, we obtain smooth convergence on compact $s$-intervals to a $g$-unit-speed curve
$$
        \gamma:[0,\infty)\to M.
$$
Equation \eqref{pregeodesic-equation} passes to the limit, so $\gamma$ is an unparametrized $h$-geodesic.

The limit is locally minimizing for $h$.  Indeed, suppose that a limit subsegment $\gamma|_{[s_1,s_2]}$ admitted an $h$-curve $\alpha$ from $\gamma(s_1)$ to $\gamma(s_2)$ such that
$$
        L_h(\alpha)\leq L_h(\gamma|_{[s_1,s_2]})-\delta
$$
for some $\delta>0$.  By the smooth convergence $\gamma_i(s_j)\to\gamma(s_j)$, $j=1,2$, we may join $\gamma_i(s_1)$ to $\gamma(s_1)$ and $\gamma(s_2)$ to $\gamma_i(s_2)$ inside small common $h$-normal balls by local $h$-geodesics with total length less than $\delta/3$.  For $i$ large, also
$$
        \left|L_h(\gamma_i|_{[s_1,s_2]})-L_h(\gamma|_{[s_1,s_2]})\right|<\delta/3 .
$$
Concatenating the two short endpoint connectors with $\alpha$ gives an $h$-curve from $\gamma_i(s_1)$ to $\gamma_i(s_2)$ shorter than $\gamma_i|_{[s_1,s_2]}$, contradicting the $h$-minimality of that subsegment.

Moreover, for every $S>0$,
$$
        L_h(\gamma|_{[0,S]})
        =\lim_{i\to\infty}L_h(\gamma_i|_{[0,S]})
        \leq \limsup_{i\to\infty} L_h(\gamma_i)
        \leq L_0 .
$$
Thus $\gamma$ has finite total $h$-length.  Define
$$
        t(s)=L_h(\gamma|_{[0,s]})=\int_0^s \sqrt{a(\gamma(\tau))}\,d\tau,
        \qquad
        T=\lim_{s\to\infty}t(s)<\infty .
$$
Since $a>0$, the function $t(s)$ is strictly increasing.  Reparametrize $\gamma$ by $h$-arclength and denote the resulting curve by $\sigma:[0,T)\to M$.  Then $\sigma$ is an $h$-unit-speed geodesic and every compact subinterval is $h$-minimizing.  Since $t(s)\to T$ as $s\to\infty$, approaching the terminal time $T$ in the $h$-arclength parameter is equivalent to letting the original $g$-arclength parameter $s$ tend to infinity.  Hence the $g$-length of $\sigma$ is infinite.

The index inequality \eqref{index-ineq-general} follows from the second variation formula on compact minimizing subsegments of $\sigma$.  More precisely, for a test function $\phi\in C_c^\infty(0,T)$, choose a compact interval containing its support.  On this interval the segment is minimizing; applying the index form to the $n-1$ variations $\phi E_j$, where $E_j$ are $h$-parallel orthonormal fields perpendicular to $\dot\sigma$, and summing in $j$ gives \eqref{index-ineq-general}.
\end{proof}

\begin{proposition}\label{prop:h-complete}
Let $5\leq n\leq 9$ and let $u>0$ solve \eqref{uRic}--\eqref{uLap} on a complete manifold $(M,g)$.  Then the conformal metric
$$
        h=ug
$$
is complete.
\end{proposition}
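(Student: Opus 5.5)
We argue by contradiction.  Suppose $h=ug$ is incomplete.  Lemma~\ref{lem:finite-geodesic}, applied with $a=u$, gives $0<T<\infty$ and an $h$-unit-speed geodesic $\sigma:[0,T)\to M$ that is minimizing on compact subintervals and has infinite $g$-length.  By \eqref{vdef}, $u=v^{-2(n-4)/(10-n)}=v^{-2\alpha}$ with $\alpha=(n-4)/(10-n)$, so $g=u^{-1}h=v^{2\alpha}h$.  Hence $|\dot\sigma|_g=v(\sigma(t))^{\alpha}$, and the infinite $g$-length reads
$$
\int_0^T v(\sigma(t))^\alpha\,dt=\infty .
$$
The goal is to contradict this divergence using Lemma~\ref{lem:log} applied to $V(t)=v(\sigma(t))$ with exponent $a=\alpha$.

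To get the hypothesis of Lemma~\ref{lem:log}, we evaluate \eqref{QE-v} on $\dot\sigma$.  Since $\sigma$ is an $h$-geodesic, $(\Hess_h v)(\dot\sigma,\dot\sigma)=V''(t)$, and $h(\dot\sigma,\dot\sigma)=1$.  This gives
$$
\Ric_h(\dot\sigma,\dot\sigma)=\lambda+m\,\frac{V''}{V}.
$$
Substituting into the index inequality \eqref{index-ineq-general} yields, for every $\phi\in C_c^\infty(0,T)$,
$$
m\int_0^T\frac{V''}{V}\phi^2\,dt\le (n-1)\int_0^T(\phi')^2\,dt-\lambda\int_0^T\phi^2\,dt .
$$
Dividing by $n-1$ gives \eqref{log-assumption} with $\rho=m/(n-1)>0$ and $b=-\lambda/(n-1)\ge0$, because $\lambda<0$.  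Lemma~\ref{lem:log} then gives $\int^T V^{a}\,dt<\infty$ for every $a>0$ with $a p_\rho<1$.  The integral over the initial portion $[0,t_0]$ is finite because $V$ is continuous and positive there.  Therefore it suffices to check $\alpha p_\rho<1$.

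This numerical verification is the only step that depends on the dimension.  Since $p(p+1)$ is increasing for $p>0$, the condition $\alpha p_\rho<1$, that is $p_\rho<1/\alpha$, is equivalent to $\rho\,\alpha^{-1}(\alpha^{-1}+1)>1/4$.  Using $\alpha^{-1}=(10-n)/(n-4)$ we get $\alpha^{-1}+1=6/(n-4)$.  With $\rho=(n-4)^2/((10-n)(n-1))$ the left side simplifies to
$$
\frac{(n-4)^2}{(10-n)(n-1)}\cdot\frac{10-n}{n-4}\cdot\frac{6}{n-4}=\frac{6}{n-1}.
$$
The condition $6/(n-1)>1/4$ holds exactly when $n<25$, so it is satisfied for $5\le n\le 9$.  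Hence $\int_0^T V^\alpha\,dt<\infty$, contradicting the infinite $g$-length, and $h$ must be complete.

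The step I expect to need the most care is applying Lemma~\ref{lem:log} honestly.  That lemma requires $V$ to be $C^2$ on $(0,T)$, which holds since $v$ and $\sigma$ are smooth.  It also requires \eqref{log-assumption} for all compactly supported $\phi$.  This follows because every compact subinterval of $[0,T)$ is minimizing, which is exactly what Lemma~\ref{lem:finite-geodesic} provides.
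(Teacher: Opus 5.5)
Your proposal is correct and follows the paper's proof essentially step for step. Like the paper, you argue by contradiction via Lemma~\ref{lem:finite-geodesic}, rewrite the infinite $g$-length as $\int_0^T v(\sigma(t))^\alpha\,dt=\infty$, and insert \eqref{QE-v} into the index inequality to get the hypothesis of Lemma~\ref{lem:log} with $\rho=m/(n-1)$ and $b=-\lambda/(n-1)$. You then verify $\alpha p_\rho<1$ through $\rho\,\alpha^{-1}(\alpha^{-1}+1)=6/(n-1)>1/4$, exactly as the paper does.
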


\begin{proof}
It suffices to argue on a connected component, since completeness is a componentwise property.  Assume by contradiction that $h$ is incomplete on such a component.  Apply Lemma \ref{lem:finite-geodesic} with $a=u$.  We obtain an $h$-unit-speed locally minimizing geodesic
$$
        \sigma:[0,T)\to M,
        \qquad T<\infty,
$$
whose $g$-length is infinite.

Let $v$ be as in \eqref{vdef}, and set
\begin{equation}\label{alpha}
        \alpha=\frac{n-4}{10-n}>0 .
\end{equation}
Since $h=ug$ and $v=u^{-(10-n)/(2(n-4))}$,
$$
        g=u^{-1}h=v^{2\alpha}h.
$$
Because $\sigma$ has unit $h$-speed,
\begin{equation}\label{infinite-glength}
        \infty=L_g(\sigma)=\int_0^T v(\sigma(t))^\alpha\,dt .
\end{equation}

On the other hand, equation \eqref{QE-v} gives, along $\sigma$,
$$
        \Ric_h(\dot\sigma,\dot\sigma)
        =m\frac{(v\circ\sigma)''}{v\circ\sigma}+\lambda .
$$
Substituting this into the index inequality \eqref{index-ineq-general} yields
\begin{equation}\label{log-from-index}
        \frac{m}{n-1}\int_0^T\frac{(v\circ\sigma)''}{v\circ\sigma}\phi^2\,dt
        \leq
        \int_0^T(\phi')^2\,dt-\frac{\lambda}{n-1}\int_0^T\phi^2\,dt
\end{equation}
for every $\phi\in C_c^\infty(0,T)$.  Thus Lemma \ref{lem:log} applies with
\begin{equation}\label{rho-def}
        \rho=\frac{m}{n-1}
        =\frac{(n-4)^2}{(10-n)(n-1)},
        \qquad
        b=-\frac{\lambda}{n-1}\geq0 .
\end{equation}
Let $p_\rho$ be given by \eqref{prho}.  We claim that
\begin{equation}\label{alphap}
        \alpha p_\rho<1 .
\end{equation}
Indeed, since $p_\rho$ is the positive root of $\rho p(p+1)=1/4$, it is enough to check that
$$
        \rho\frac1\alpha\left(\frac1\alpha+1\right)>\frac14 .
$$
Using \eqref{alpha} and \eqref{rho-def},
$$
        \rho\frac1\alpha\left(\frac1\alpha+1\right)
        =\frac{(n-4)^2}{(10-n)(n-1)}
        \cdot\frac{10-n}{n-4}\cdot\frac6{n-4}
        =\frac6{n-1}>\frac14
$$
for every $5\leq n\leq 9$.  Hence \eqref{alphap} holds.  Lemma \ref{lem:log} gives integrability on a terminal interval: for some $\varepsilon>0$,
$$
        \int_{T-\varepsilon}^T v(\sigma(t))^\alpha\,dt<\infty .
$$
On the remaining compact interval $[0,T-\varepsilon]$, the function $v\circ\sigma$ is smooth and positive, hence bounded above.  Therefore
$$
        \int_0^T v(\sigma(t))^\alpha\,dt<\infty,
$$
contradicting \eqref{infinite-glength}.  Therefore $h$ is complete.
\end{proof}

\section{A weighted maximum principle and the gradient estimate}\label{sec:maprinciple}

We next use the complete quasi-Einstein structure to prove a gradient estimate for $u$.

\begin{lemma}[Weighted cutoff maximum principle]\label{lem:weightedMP}
Let $(N^n,h)$ be complete, let $f\in C^\infty(N)$, and suppose that, for some $m>0$ and $\lambda<0$,
\begin{equation}\label{Ricfm-lower}
        \Ric_h+\Hess_h f-\frac1m df\otimes df=\lambda h .
\end{equation}
Let $G\geq0$ be smooth and satisfy
\begin{equation}\label{Gineq}
        \frac12\Delta_f G\geq G(mG+\lambda),
        \qquad
        \Delta_f=\Delta_h-\ip{\nabla_h f}{\nabla_h\cdot}_h .
\end{equation}
Then
\begin{equation}\label{GboundMP}
        G\leq -\frac{\lambda}{m}\quad\text{on }N .
\end{equation}
\end{lemma}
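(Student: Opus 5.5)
The proof will be an Omori--Yau/Cheng--Yau type cutoff argument. The key input is the finite-dimensional Bakry--Emery Laplacian comparison for the distance function on the complete manifold $(N,h)$. Fix $o\in N$ and let $r=d_h(o,\cdot)$. By \eqref{Ricfm-lower}, $\Ric_f^m:=\Ric_h+\Hess_h f-\frac1m df\otimes df=\lambda h$ with $\lambda<0$. This is exactly the hypothesis of Qian's comparison theorem (see also \cite{WeiWylie}), which treats the weighted manifold as having effective dimension $n+m$. It gives
$$
\Delta_f r\le (n+m-1)\sqrt{\tfrac{-\lambda}{n+m-1}}\coth\Bigl(\sqrt{\tfrac{-\lambda}{n+m-1}}\,r\Bigr),
$$
in the barrier sense away from $o$ and smoothly off the cut locus. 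In particular $\Delta_f r\le C_0(1+1/r)$ with $C_0$ depending only on $n,m,\lambda$. The finiteness of $m$ is what makes this bound independent of $f$; no control on $f$ or on $|\nabla f|$ is required.

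Next, fix $R\gg1$ and a cutoff $\psi=\eta(r/R)$, where $\eta\equiv1$ on $[0,1]$, $\eta\equiv0$ on $[2,\infty)$, $\eta'\le0$, and $|\eta'|^2\le C\eta$, $|\eta''|\le C$. Then
$$
|\nabla\psi|^2/\psi\le C/R^2,\qquad \Delta_f\psi\ge -C/R
$$
wherever $\psi$ is smooth. The second bound uses $\eta'\le0$, the comparison above on $\{R\le r\le2R\}$, and the estimate $|\eta''|/R^2$. At a cut point we handle $r$ by Calabi's trick, replacing $r$ by a smooth upper barrier $r_\epsilon$ near the maximum point. Now consider $Q=\psi G$, which has compact support. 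If $Q\equiv0$ there is nothing to prove. Otherwise $Q$ attains a positive maximum at some $x_0$, where $\nabla Q=0$ and $\Delta_f Q\le0$. Using $\nabla G=-G\nabla\psi/\psi$ at $x_0$,
$$
0\ge\psi\Delta_f Q=\psi^2\Delta_f G+\psi G\Delta_f\psi-2G|\nabla\psi|^2
\ge 2\psi^2G(mG+\lambda)-\frac{C}{R}\psi G-\frac{C}{R^2}\psi G .
$$
Dividing by $\psi G>0$ gives $2m\,Q+2\lambda\psi\le C/R+C/R^2$. Since $\psi\le1$ and $\lambda<0$, we have $\lambda\psi\ge\lambda$, and so
$$
Q(x_0)\le -\frac{\lambda}{m}+\frac{C}{R}.
$$

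On $B_R^h(o)$ this gives $G\le Q\le Q(x_0)\le-\lambda/m+C/R$. Letting $R\to\infty$ yields \eqref{GboundMP}. Completeness of $h$ is needed so that $\psi$ has compact support and $\max Q$ is attained.

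The main obstacle is the non-smoothness of $r$ at the cut locus and at $o$. Two points need care. First, near $o$ we have $\psi\equiv1$, since $R\ge1$. Second, at cut points we use the standard support-function argument: take a point $q$ on a minimizing geodesic from $o$ to $x_0$ and replace $r$ by $d(q,\cdot)+d(o,q)$. This function is smooth near $x_0$, lies above $r$, and satisfies the same comparison bound up to $\epsilon$. It preserves the maximum of $Q$ because $\eta$ is nonincreasing. The resulting $\Delta_f$ bound holds as long as $x_0\notin B_1(o)$, and inside $B_R$ the cutoff is constant anyway.
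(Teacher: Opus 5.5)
Your proposal is correct and follows essentially the same argument as the paper. Both use Qian's finite-dimensional Bakry--Emery Laplacian comparison with synthetic dimension $n+m$, the cutoff $\eta(r/R)G$ with $\eta'\le 0$ and $|\eta'|^2\le C\eta$, Calabi's support-function trick at cut points, and the maximum-point computation giving $\sup_{B_R}G\le -\lambda/m+O(R^{-1})$. The only differences are cosmetic: you multiply by $\psi$ before using $\nabla Q=0$, and you treat interior and transition-region maxima uniformly rather than splitting into two cases.
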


\begin{proof}
Equation \eqref{Ricfm-lower} says that the $m$-Bakry--Emery Ricci tensor satisfies
$$
        \Ric_f^m:=\Ric_h+\Hess_h f-\frac1m df\otimes df=\lambda h .
$$
Put $N_0=n+m$.  Since $m>0$, this is a finite synthetic dimension with $N_0>n$.  The finite-dimensional Bakry--Emery Laplacian comparison theorem for $\Ric_f^m$ with synthetic dimension $N_0$ gives, away from the cut locus of a fixed point $o$,
\begin{equation}\label{lap-comp}
        \Delta_f r\leq (N_0-1)c\coth(cr),
        \qquad
        c=\sqrt{\frac{-\lambda}{N_0-1}},
\end{equation}
where $r=d_h(o,\cdot)$.  This finite-dimensional comparison goes back to Qian \cite{Qian}; the formulation used here is, for example, the $N$-Bakry--Emery mean-curvature comparison stated in \cite[Theorem~A.1]{WeiWylie}.  In particular, $\Delta_f r\leq C_0$ on $\{r\geq1\}$ in the barrier sense.

Choose a nonincreasing smooth function $\eta:[0,\infty)\to[0,1]$ such that
$$
        \eta\equiv1\text{ on }[0,1],
        \qquad
        \eta\equiv0\text{ on }[2,\infty),
$$
with
$$
        |\eta'|^2\leq C\eta,
        \qquad
        |\eta''|\leq C .
$$
For $R\geq2$, set
$$
        \eta_R(x)=\eta(r(x)/R),
        \qquad
        H_R=\eta_R G .
$$
Since $(N,h)$ is complete, the closed ball $\overline{B_{2R}(o)}$ is compact by Hopf--Rinow.  The function $H_R$ is supported in this ball, hence it attains a maximum at some point $x_R$.  If $H_R(x_R)=0$, then $G=0$ on $B_R(o)$ and there is nothing to prove.  Otherwise $x_R$ lies where $G>0$ and $\eta_R>0$.

The standard Calabi trick handles the possible cut-locus issue.  More explicitly, if $x_R$ lies in the cut locus of $o$, one replaces $r$ near $x_R$ by the smooth upper barrier
$$
        r_\varepsilon(x)=\varepsilon+d_h(\gamma(\varepsilon),x),
$$
where $\gamma$ is a minimizing geodesic from $o$ to $x_R$.  Then $r\le r_\varepsilon$ near $x_R$ and equality holds at $x_R$.  Since $\eta$ is nonincreasing, $\eta(r_\varepsilon/R)G\le \eta(r/R)G$ near $x_R$, with equality at $x_R$; hence $x_R$ is still a local maximum for the barrier version.  Applying the following argument to $\eta(r_\varepsilon/R)G$ and letting $\varepsilon\downarrow0$, we may compute as if $r$ were smooth at $x_R$.  At $x_R$,
$$
        \nabla H_R=0,
        \qquad
        \Delta_f H_R\leq0 .
$$
Using \eqref{Gineq},
\begin{align*}
0
&\geq \Delta_f(\eta_R G) \\
&=\eta_R\Delta_f G+G\Delta_f\eta_R+2\ip{\nabla\eta_R}{\nabla G} \\
&\geq 2\eta_R G(mG+\lambda)+G\Delta_f\eta_R
      -2G\frac{\norm{\nabla\eta_R}^2}{\eta_R} .
\end{align*}
Dividing by $G>0$ and writing $H_R=\eta_R G$ at $x_R$, we obtain
\begin{equation}\label{HRineq}
        2mH_R
        \leq -2\lambda\eta_R-\Delta_f\eta_R
        +2\frac{\norm{\nabla\eta_R}^2}{\eta_R} .
\end{equation}

If $x_R\in B_R(o)$, then $\eta_R\equiv1$ in a neighbourhood of $x_R$ and the two cutoff-error terms in \eqref{HRineq} vanish; hence
$$
        H_R(x_R)\leq -\frac{\lambda}{m}.
$$
If $x_R$ lies in the transition region $R\leq r(x_R)\leq2R$, then \eqref{lap-comp} and $\eta'\leq0$ give
$$
        -\Delta_f\eta_R
        =-\frac{\eta'(r/R)}{R}\Delta_f r-\frac{\eta''(r/R)}{R^2}
        \leq \frac{C}{R}+\frac{C}{R^2},
        \qquad
        \frac{\norm{\nabla\eta_R}^2}{\eta_R}
        \leq \frac{C}{R^2} .
$$
Therefore, in all cases,
$$
        H_R(x_R)\leq -\frac{\lambda}{m}+O(R^{-1}).
$$
Since $\eta_R\equiv1$ on $B_R(o)$,
$$
        \sup_{B_R(o)}G\leq \sup_N H_R\leq -\frac{\lambda}{m}+O(R^{-1}).
$$
Letting $R\to\infty$ proves \eqref{GboundMP}.
\end{proof}

\begin{proposition}\label{prop:grad}
For $5\leq n\leq 9$, the function $u=-R>0$ satisfies
\begin{equation}\label{grad-estimate}
        \norm{\nabla_g u}_g^2
        \leq
        \frac{1}{2(n-1)}u^3 .
\end{equation}
\end{proposition}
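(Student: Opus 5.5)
The plan is to assemble the pieces already established: completeness of $h$, the quasi-Einstein structure, and the Bochner identity. Then we translate the resulting bound back to $g$.

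First, by Proposition \ref{prop:h-complete}, the metric $h=ug$ is complete on each connected component. By Proposition \ref{prop:QE}, the pair $(h,f)$ with $f=\frac{n-4}{2}\log u$ satisfies \eqref{QE-f} with $m=\frac{(n-4)^2}{10-n}>0$ and $\lambda=\frac{n-10}{8(n-1)}<0$. So the hypotheses \eqref{Ricfm-lower} of Lemma \ref{lem:weightedMP} hold with $N=M$.

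Next, I take $G=|\nabla_h w|_h^2\ge 0$, which is smooth. Lemma \ref{lem:Delfw} gives $\frac12\Delta_f^h G=|\Hess_h w|_h^2+G(mG+\lambda)\ge G(mG+\lambda)$, which is exactly \eqref{Gineq}. Lemma \ref{lem:weightedMP} then yields
$$
G\le -\frac{\lambda}{m}=\frac{(10-n)^2}{8(n-1)(n-4)^2}.
$$

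It remains to translate this into $g$. Since $w=-\beta\log u$ with $\beta=\frac{10-n}{2(n-4)}$ and $h=ug$, we have $|\nabla_h w|_h^2=u^{-1}|\nabla_g w|_g^2$, and therefore
$$
G=\beta^2u^{-3}|\nabla_g u|_g^2=\frac{(10-n)^2}{4(n-4)^2}\,u^{-3}|\nabla_g u|_g^2 .
$$
Inserting this into the bound on $G$ and cancelling the common factor $(10-n)^2/(n-4)^2$, which is nonzero for $5\le n\le 9$, gives
$$
u^{-3}|\nabla_g u|_g^2\le \frac{4}{8(n-1)}=\frac{1}{2(n-1)},
$$
which is \eqref{grad-estimate}.

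The only substantive step is the appeal to Lemma \ref{lem:weightedMP}, which in turn needs the completeness of $h$ from Proposition \ref{prop:h-complete}; the Laplacian comparison is built into that lemma. Everything else is bookkeeping. I would double-check the conformal scaling $|\nabla_h\psi|_h^2=u^{-1}|\nabla_g\psi|_g^2$ and the constant $\beta^2$, since an error there would change the final coefficient.
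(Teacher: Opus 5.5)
Your proposal is correct and follows the paper's proof essentially verbatim. It uses completeness of $h$ from Proposition~\ref{prop:h-complete}, applies Lemma~\ref{lem:weightedMP} to $G=\norm{\nabla_h w}_h^2$ with the differential inequality supplied by \eqref{BochnerG}, and converts back via $G=\beta^2u^{-3}\norm{\nabla_g u}_g^2$; your conformal scaling and constants check out.
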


\begin{proof}
By Proposition \ref{prop:h-complete}, $h=ug$ is complete.  Apply Lemma \ref{lem:weightedMP} to the complete weighted manifold $(M,h,e^{-f}dV_h)$ and to
$$
        G=\norm{\nabla_h w}_h^2 .
$$
The required differential inequality follows from \eqref{BochnerG}.  Hence
$$
        G\leq -\frac{\lambda}{m}
        =\frac{(10-n)^2}{8(n-1)(n-4)^2} .
$$
Since
$$
        w=-\frac{10-n}{2(n-4)}\log u
$$
and $h=ug$,
$$
        G
        =\left(\frac{10-n}{2(n-4)}\right)^2
          \frac{\norm{\nabla_g u}_g^2}{u^3} .
$$
Dividing by the prefactor gives \eqref{grad-estimate}.
\end{proof}

\section{Completion of the proof of the main theorem}\label{sec:proofmain}

We now finish the proof of Theorem \ref{thm:main}.  Recall that we are arguing by contradiction under the assumption $u=-R>0$ on the connected component under consideration.

From \eqref{grad-estimate},
\begin{equation}\label{uinv-grad}
        \norm{\nabla_g(u^{-1/2})}_g
        =\frac12 u^{-3/2}\norm{\nabla_g u}_g
        \leq \frac{1}{2\sqrt{2(n-1)}} .
\end{equation}
Fix $O\in M$.  Integrating \eqref{uinv-grad} along minimizing $g$-geodesics gives constants $c_1,c_2>0$ such that
\begin{equation}\label{u-lower}
        u(x)\geq \frac{c_1}{c_2+d_g(x,O)^2}
        \qquad\text{for all }x\in M .
\end{equation}

Let $\eta_s\in C_c^\infty(M)$ be a standard cutoff satisfying
$$
        0\leq\eta_s\leq1,
        \qquad
        \eta_s\equiv1\text{ on }B_s^g(O),
        \qquad
        \eta_s\equiv0\text{ on }M\setminus B_{2s}^g(O),
$$
and
$$
        \norm{\nabla_g\eta_s}_g\leq\frac{C}{s} .
$$
Such cutoffs may be obtained by taking the Lipschitz distance cutoff and smoothing it with only a harmless change in the gradient bound; see, for example, Greene--Wu \cite{GreeneWu}.  Alternatively, the following integration by parts can be justified directly in the weak sense with the Lipschitz cutoff.  Using \eqref{uLap}, integration by parts gives
\begin{align*}
        \frac{n-4}{4(n-1)}\int_M u^2\eta_s^2\,dV_g
        &= -\int_M (\Delta_g u)\eta_s^2\,dV_g \\
        &= 2\int_M \eta_s\ip{\nabla_g u}{\nabla_g\eta_s}_g\,dV_g \\
        &\leq 2\int_M \eta_s\norm{\nabla_g u}_g\norm{\nabla_g\eta_s}_g\,dV_g .
\end{align*}
Therefore, by \eqref{grad-estimate},
\begin{equation}\label{cutoff-ineq1}
        \frac{n-4}{4(n-1)}\int_M u^2\eta_s^2\,dV_g
        \leq
        \frac{C}{s}\int_{B_{2s}(O)\setminus B_s(O)}u^{3/2}\,dV_g .
\end{equation}
On the annulus $B_{2s}(O)\setminus B_s(O)$, the lower bound \eqref{u-lower} implies
$$
        u^{-1/2}\leq C(1+s).
$$
Hence
\begin{align*}
        \frac1s\int_{B_{2s}\setminus B_s}u^{3/2}\,dV_g
        &=\frac1s\int_{B_{2s}\setminus B_s}u^2u^{-1/2}\,dV_g \\
        &\leq C\int_{B_{2s}\setminus B_s}u^2\,dV_g .
\end{align*}
Since $u\in L^2(M,g)$, the right-hand side tends to zero as $s\to\infty$.  Therefore
\begin{equation}\label{eta-limit}
        \int_M u^2\eta_s^2\,dV_g\to0 .
\end{equation}
For any fixed $R>0$, taking $s>R$ gives
$$
        \int_{B_R(O)}u^2\,dV_g
        \leq
        \int_M u^2\eta_s^2\,dV_g .
$$
Letting $s\to\infty$ and using \eqref{eta-limit}, we obtain
$$
        \int_{B_R(O)}u^2\,dV_g=0 .
$$
Since the radius $R>0$ is arbitrary, $u\equiv0$ on the component, contradicting $u>0$.  Thus the negative scalar-curvature alternative is impossible on every connected component.  By Lemma \ref{lem:sign}, the only remaining possibility is
$$
        R\equiv0 .
$$
This proves Theorem \ref{thm:main}.

\begin{remark}
The proof uses finite energy only through the sign alternative and the final $L^2$ tail estimate.  The conformal metric $h=ug$ is tailored to the low-dimensional range: for $5\leq n\leq9$, it produces a positive finite Bakry--Emery parameter $m$ in the expanding quasi-Einstein equation.  This is opposite to the conformal metric $u^{6/(n-4)}g$ used in the high-dimensional argument of \cite{CMM2026}, which is effective for $n\geq10$ by \cite[Theorem~1.1]{CMM2026} but does not have the same sign features in dimensions $5\leq n\leq9$.
\end{remark}

\end{document}